\numberwithin{equation}{section}
\newtheorem{theorem}{Theorem}[section]
\theoremstyle{plain}
\newtheorem{lemma}{Lemma}[section]
\numberwithin{equation}{section}
\begin{document}
\title[The radius of uniform convexity of Bessel functions]{The radius of
uniform convexity of Bessel functions}
\author{Erhan Deniz}
\address{Department of Mathematics, Faculty of Science and Letters, Kafkas
University, Kars, Turkey}
\email{edeniz36@gmail.com}
\author{Róbert Szász}
\address{Department of Mathematics and Informatics, Sapientia Hungarian
University of Transylvania, Târgu-Mureş, Romania}
\email{rszasz@ms.sapientia.ro}
\keywords{Normalized Bessel functions of the fist kind, uniformly convex
function, radius of uniform convexity, zeros of Bessel functions\\
{\indent\textrm{2010 }}\ \textit{Mathematics Subject Classification:}
Primary 33C10, Secondary 30C45.}

\begin{abstract}
In this paper, we determine the radius of uniform convexity for three kinds
of normalized Bessel functions of the first kind. In the mentioned cases the
normalized Bessel functions are uniformly convex on the determined disks.
Moreover, necessary and sufficient conditions are given for the parameters
of the three normalized functions such that they to be uniformly convex in
the open unit disk. The basic tool of this study is the development of
Bessel functions in function series.
\end{abstract}

\maketitle

\section{Introduction}

It is well known that the concepts of convexity, starlikeness,
close-to-convexity and uniform convexity including necessary and sufficient
conditions, have a long history as a part of geometric function theory. In
1945, Pólya and Szegö \cite{Ps} found the necessary and sufficient
conditions of convexity and starlikeness for analytic functions which were
further generalized by Royster \cite{Roy} and Bernardi \cite{Be}. After that
several authors contributed to this literature by doing generalizations of
the previously developed criterions and also by introducing some new ones,
for details, see \cite{God,J,Oz}. Significant contributions to the same were
made by Mocanu \cite{Mo1,Mo2}, Obradović \cite{Ob} and Owa et.al \cite{Ow}
by introducing more applicable criterions for close-to-convexity, convexity
and starlikeness. Tuneski \cite{Tu} used the method of differential
subordination to find the conditions for starlikeness of analytic functions.
In 1993, R\o nning \cite{10} determined necessary and sufficient conditions
of analytic functions to be uniformly convex in the open unit disk, while in
2002 Ravichandran \cite{Rav} also presented more simple criterions for
uniform convexity. Silverman \cite{Sl} investigated the properties of
functions defined in terms of the quotient of analytic representations of
convex and starlike functions which was then improved by Obradović and
Tuneski \cite{Ob1} and Tuneski \cite{Tu1}. Recent works on certain
criterions of convexity and starlikeness can be found in \cite%
{Nu,Nu1,Nu2,Tu2}.

On the other hand, one of the most important applications of the concepts of
convexity, starlikeness, close-to-convexity and uniform convexity is to find
the necessary and sufficient condition of theirs for hypergeometric and
Bessel functions. In 1961, Merkes and Scott \cite{Mer} investigated the
starlikeness and univalence of Gaussian hypergeometric functions by using
continued-fraction representations, while in 1986 Ruscheweyh and Singh \cite%
{Ru} obtained the exact order of starlikeness by using the same technique.
Owa and Srivastava \cite{OwS} investigated the geometric properties of
generalized hypergeometric functions using the well known Jack's lemma.
Miller and Mocanu \cite{Mi} employed the method of differential
subordinations to investigate the local univalence, starlikeness and
convexity of certain hypergeometric functions. Silverman \cite{Sil} in 1993
also investigated the starlikeness and convexity of Gaussian hypergeometric
functions, while in 1998 and 2001 Ponnusamy and Vuorinen \cite{Po1,Po2}
presented some generalizations of the results of Miller and Mocanu, and
determined conditions of close-to-convexity of confluent (or Kummer) and
Gaussian hypergeometric functions, respectively. Küstner \cite{Ku} by using
among others the continued fraction of C.F. Gauss determined the order of
convexity and starlikeness of hypergeometric functions. Many authors have
determined the necessary and sufficient conditions for the hypergeometric
functions to be uniform convexity \cite{Kim,Sri,Sw}. An extensive
bibliography and history of convexity, starlikeness and close-to-convexity
for Bessel functions can be found in the two sections of Chap. 3.

Let $U(z_{0},r)=\{z\in \mathbb{C}:|z-z_{0}|<r\}$ denote the disk of radius $%
r $ and center $z_{0}.$ We denote by $U(r)=U(0,r)$ and by $U=U(0,1)=\{z\in 
\mathbb{C}:|z|<1\}.$ Let $(a_{n})_{n\geq 2}$ be a sequence of complex
numbers with 
\begin{equation*}
d=\limsup_{n\rightarrow \infty }|a_{n}|^{\frac{1}{n}}\geq 0,\ \text{and}\
r_{f}=\frac{1}{d}.
\end{equation*}%
If $d=0$ then $r_{f}=+\infty .$ The power series 
\begin{equation}
f(z)=z+\sum_{n=2}^{\infty }{a}_{n}z^{n}  \label{1}
\end{equation}%
defines an analytic function $f:U(r_{f})\rightarrow \mathbb{C}.$\newline
For $r\in (0,r_{f})$ we say that the function $f$ is starlike in the disk $%
U(r)=\{z\in {%
\mathbb{C}
}:|z|<r\}$ if $f$ is univalent in $U(r),$ and $f(U(r))$ is a starlike domain
with respect to $0$ in $\mathbb{C}$. Regarding the starlikeness of the
function $f$ the following equivalency holds 
\begin{equation*}
f\ \text{is starlike in}\ U(r)\ \text{if\ and only\ if}\ \func{Re}\left( 
\frac{zf^{\prime }(z)}{f(z)}\right) >0,\ \ z\in {U(r)}.
\end{equation*}%
We define by 
\begin{equation*}
r_{f}^{\ast }=\sup \left\{ r\in (0,r_{f}):\ \func{Re}\left( \frac{zf^{\prime
}(z)}{f(z)}\right) >0,\ \ z\in {U}(r)\right\}
\end{equation*}%
the radius of starlikeness of the function $f.$ \newline
The radius of convexity is defined in a similar manner. We say that a
function $f$ of the form (\ref{1}) is convex if $f$ is univalent and $%
f(U(r)) $ is a convex domain in $\mathbb{C}$. An analytic description of
this definition is 
\begin{equation*}
f\in \mathcal{A}\ \text{is convex\ if and only if}\ \func{Re}\left( 1+\frac{%
zf^{\prime \prime }(z)}{f^{\prime }(z)}\right) >0,\ \ z\in {U(r)}.
\end{equation*}%
The radius of convexity of the function $f$ is defined by 
\begin{equation*}
r_{f}^{c}=\sup \left\{ r\in (0,r_{f}):\ \func{Re}\left( 1+\frac{zf^{\prime
\prime }(z)}{f^{\prime }(z)}\right) >0,\ \ z\in {U}(r)\right\} .
\end{equation*}%
We will give a few definitions and results in the next section which we will
use further on to determine the radius of uniform convexity.

\section{Preliminaries}

In the following we deal with the class of the uniformly convex functions.
Goodman in \cite{Goo} introduced the concept of uniform convexity for
functions of the form (\ref{1}). A function $f$ is said to be uniformly
convex in $U(r)$ if $f$ is of the form (\ref{1}), it is convex, and has the
property that for every circular arc $\gamma $ contained in $U(r)$, with
center $\varsigma $, also in $U(r),$ the arc $f(\gamma )$ is convex. An
analytic description of the uniformly convex functions is given in the next
theorem, which is a slight modification of Theorem 1 from \cite{10}.

\begin{theorem}
\label{T1} Let $f$ be a function of the form $f(z)=z+\sum_{n=2}^{\infty
}a_{n}z^{n},$ and analytic in the disk $U(r).$ The function $f$ is uniformly
convex in the disk $U(r)$ if and only if 
\begin{equation}
\func{Re}\left( 1+\frac{zf^{\prime \prime }(z)}{f^{\prime }(z)}\right)
>\left\vert \frac{zf^{\prime \prime }(z)}{f^{\prime }(z)}\right\vert ,\ \ \
z\in {U(r).}  \label{d1}
\end{equation}
\end{theorem}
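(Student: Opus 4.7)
The plan is to deduce the theorem from R\o nning's original characterization on the unit disk (the ``slight modification'' alluded to in the preamble) by a scaling argument. Set $g(z)=f(rz)/r$; then $g$ is analytic on $U$ and normalized as $g(z)=z+\sum_{n\ge 2}a_{n}r^{n-1}z^{n}$, and the analytic condition for $f$ on $U(r)$ should translate precisely into R\o nning's condition for $g$ on $U$.

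First I would verify that uniform convexity is preserved under this similarity. The map $z\mapsto rz$ is a bijection between circular arcs $\tilde\gamma\subset U$ with center $\tilde\varsigma\in U$ and circular arcs $\gamma=r\tilde\gamma\subset U(r)$ with center $\varsigma=r\tilde\varsigma\in U(r)$, and since $g(\tilde\gamma)=f(\gamma)/r$ is a Euclidean similarity of $f(\gamma)$, convexity of the image arc is preserved. Hence $g$ is uniformly convex on $U$ if and only if $f$ is uniformly convex on $U(r)$. Next I would invoke R\o nning's theorem on $g$, which gives $\func{Re}(1+zg''(z)/g'(z))>|zg''(z)/g'(z)|$ for $z\in U$. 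From $g'(z)=f'(rz)$ and $g''(z)=rf''(rz)$, a short computation yields $zg''(z)/g'(z)=rzf''(rz)/f'(rz)$. Substituting $w=rz$, which ranges over $U(r)$ as $z$ ranges over $U$, R\o nning's inequality becomes precisely the inequality (\ref{d1}) of Theorem~\ref{T1}.

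As an alternative, one could give a direct proof mirroring R\o nning's argument inside $U(r)$: parametrize a circular arc $\gamma(t)=\varsigma+\rho e^{it}\subset U(r)$, set $w(t)=f(\gamma(t))$, and compute $w''(t)/w'(t)=i\bigl[1+(\gamma(t)-\varsigma)f''(\gamma(t))/f'(\gamma(t))\bigr]$; the convexity of $f(\gamma)$ is then equivalent to $\func{Im}(w''/w')\ge 0$ along the arc, which pointwise reads as the two-variable inequality $\func{Re}(1+(z-\varsigma)f''(z)/f'(z))\ge 0$ on $U(r)\times U(r)$. Specializing $\varsigma=ze^{i\alpha}$ (legitimate since $|z|<r$) and optimizing over $\alpha$ then produces the one-variable inequality (\ref{d1}).

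The main obstacle is precisely the converse of this last reduction, equivalently R\o nning's original theorem which is taken here as input. Passing from the one-variable inequality (\ref{d1}) back to the geometric arc condition is the non-trivial step; the standard route uses the Alexander transform, identifying the class of uniformly convex functions with the preimage of the parabolic starlike class $S_p=\{h:\func{Re}(zh'(z)/h(z))>|zh'(z)/h(z)-1|\}$, and exploiting the fact that the parabolic region $\{w:\func{Re}(w)>|w-1|\}$ is a convex subordination target. This implication, rather than the scaling in the first paragraph, is where the substantive content sits.
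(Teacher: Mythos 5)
Your proposal is correct and follows essentially the paper's own route: the paper gives no proof of Theorem~\ref{T1} at all, presenting it as a ``slight modification'' of Theorem~1 of \cite{10}, and your scaling argument $g(z)=f(rz)/r$, with the verification that similarity preserves the arc condition and the identity $zg^{\prime\prime}(z)/g^{\prime}(z)=wf^{\prime\prime}(w)/f^{\prime}(w)$ for $w=rz$, is exactly the reduction the paper leaves implicit. Both you and the paper take R\o nning's theorem on $U$ as the substantive input, so nothing further is required.
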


This theorem makes possible to determine the radius of uniform convexity of
Bessel functions. The radius of uniform convexity is defined by 
\begin{equation*}
r_{f}^{uc}=\sup \left\{ r\in (0,r_{f}):\ \func{Re}\left( 1+\frac{zf^{\prime
\prime }(z)}{f^{\prime }(z)}\right) >\left\vert \frac{zf^{\prime \prime }(z)%
}{f^{\prime }(z)}\right\vert ,\ \ z\in {U}(r)\right\} .
\end{equation*}%
In order to prove the main results later on, we need the following lemma.

\begin{lemma}
\label{L1} i. If $a>b>r\geq |z|,$ and $\lambda \in \lbrack 0,1],$ then 
\begin{equation}
\left\vert \frac{z}{b-z}-\lambda \frac{z}{a-z}\right\vert \leq \frac{r}{b-r}%
-\lambda \frac{r}{a-r}.  \label{2}
\end{equation}%
Very simple consequences of this inequality are the followings 
\begin{equation}
\func{Re}\left( \frac{z}{b-z}-\lambda \frac{z}{a-z}\right) \leq \frac{r}{b-r}%
-\lambda \frac{r}{a-r}  \label{3}
\end{equation}%
and 
\begin{equation}
\func{Re}\left( \frac{z}{b-z}\right) \leq \left\vert \frac{z}{b-z}%
\right\vert \leq \frac{r}{b-r}.  \label{4}
\end{equation}%
ii. If $b>a>r\geq |z|,$ then%
\begin{equation}
\left\vert \frac{1}{(a+z)(b-z)}\right\vert \leq \frac{1}{(a-r)(b+r)}.
\label{41}
\end{equation}
\end{lemma}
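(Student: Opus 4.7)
The plan is to handle the two parts separately.

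\textbf{Part (i).} I would expand each fraction as a geometric series: since $r<b<a$, the representations
\[
\frac{z}{b-z} = \sum_{n=1}^{\infty}\frac{z^n}{b^n},\qquad
\frac{z}{a-z} = \sum_{n=1}^{\infty}\frac{z^n}{a^n}
\]
converge on $|z|\le r$, whence
\[
\frac{z}{b-z} - \lambda\,\frac{z}{a-z}
= \sum_{n=1}^{\infty}\left(\frac{1}{b^n} - \frac{\lambda}{a^n}\right) z^n.
\]
The crucial observation is that every coefficient is nonnegative: since $a>b$ and $\lambda\in[0,1]$,
\[
\frac{1}{b^n} - \frac{\lambda}{a^n} \;\ge\; \frac{1}{b^n} - \frac{1}{a^n} \;\ge\; 0.
\]
Applying the triangle inequality together with $|z|\le r$ then gives
\[
\left|\frac{z}{b-z} - \lambda\,\frac{z}{a-z}\right|
\;\le\; \sum_{n=1}^{\infty}\left(\frac{1}{b^n} - \frac{\lambda}{a^n}\right) r^n
\;=\; \frac{r}{b-r} - \lambda\,\frac{r}{a-r},
\]
which is (\ref{2}); then (\ref{3}) follows by taking real parts, and (\ref{4}) by specializing to $\lambda=0$.

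\textbf{Part (ii).} Inequality (\ref{41}) is equivalent to the lower bound $|(a+z)(b-z)|\ge(a-r)(b+r)$ for $|z|\le r$. Writing $z=u+iv$ with $u^2+v^2\le r^2$, I would argue in two stages. First, for each fixed $u$,
\[
|(a+z)(b-z)|^2 = \bigl[(a+u)^2 + v^2\bigr]\bigl[(b-u)^2 + v^2\bigr]
\]
is strictly increasing in $v^2$ (its derivative in $v^2$ is $(a+u)^2+(b-u)^2+2v^2>0$), so the minimum over the disk is attained on the real axis $v=0$. Second, on $u\in[-r,r]$ the polynomial $(a+u)(b-u)$ has derivative $b-a-2u$, hence is either monotone on $[-r,r]$ or attains an interior maximum at $u=(b-a)/2>0$; in either case its minimum is reached at an endpoint, and
\[
(a-r)(b+r) - (a+r)(b-r) = 2r(a-b) < 0
\]
identifies $u=-r$ as the minimizer, with minimum value $(a-r)(b+r)$. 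Passing to reciprocals yields (\ref{41}).

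The main obstacle is the optimization in part (ii): the naive triangle-inequality bound only gives $|a+z||b-z|\ge(a-r)(b-r)$, which is too weak, so one genuinely has to locate the extremal point $z=-r$ via the two-step reduction above (first collapse $v$ to $0$, then push $u$ to the endpoint $-r$). Part (i) is by contrast an immediate consequence of nonnegativity of the Taylor coefficients of the combination, once the right ordering $a>b$ and the constraint $\lambda\in[0,1]$ are used.
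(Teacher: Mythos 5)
Your proof is correct, and it takes a genuinely different route from the paper's, most notably in part (i). For (\ref{2}) the paper first invokes the maximum principle for harmonic functions to reduce to $z=re^{i\theta}$, then studies the auxiliary function $\varphi(\lambda)=\left(\frac{1}{b_{1}-1}-\frac{\lambda}{a_{1}-1}\right)^{2}-\left|\frac{1}{b_{1}-e^{i\theta}}-\frac{\lambda}{a_{1}-e^{i\theta}}\right|^{2}$: it shows $\varphi''>0$, hence $\varphi'$ is increasing, verifies $\varphi'(1)<0$ through a further monotonicity argument in $t=\cos\theta$, concludes that the worst case is $\lambda=1$, and settles that case via $|a_{1}-e^{i\theta}|\geq a_{1}-1$ and $|b_{1}-e^{i\theta}|\geq b_{1}-1$. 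Your geometric-series argument replaces all of this with the single observation that $\frac{z}{b-z}-\lambda\frac{z}{a-z}=\sum_{n\geq 1}(b^{-n}-\lambda a^{-n})z^{n}$ has nonnegative coefficients (using $a>b>0$ and $\lambda\leq 1$), after which the triangle inequality gives (\ref{2}) immediately, and in the slightly sharper form with $|z|$ in place of $r$ on the right; this is much shorter and needs no maximum principle, though it is tied to the special rational structure of these terms, whereas the paper's $\varphi$-method is the template it reuses for related estimates. In part (ii) the two arguments are closer: the paper restricts to the boundary circle, sets $t=\cos\theta$, and uses concavity of $u(t)=(\beta^{2}-2\beta t+1)(\alpha^{2}+2\alpha t+1)$ to push the minimum to $t=\pm 1$, then compares the endpoint values; you optimize over the whole closed disk in Cartesian coordinates, first collapsing $v$ to $0$ by monotonicity in $v^{2}$ and then pushing $u$ to $-r$ by concavity of $(a+u)(b-u)$. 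Both identify the extremal point $z=-r$, and both endpoint comparisons reduce to $2r(a-b)<0$; your version simply trades the maximum-principle reduction for an equally easy direct two-variable reduction. All the computational steps you give check out, including convergence of the series for $|z|\leq r<b$ and the positivity of $(a+u)^{2}+(b-u)^{2}+2v^{2}$.
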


\begin{proof}
\textit{i. }According to the maximum principle for harmonic functions we
have to prove inequality (\ref{2}) only in case $z=re^{i\theta }.$ In this
case the inequality is equivalent to 
\begin{equation}
\left\vert \frac{1}{b-re^{i\theta }}-\lambda \frac{1}{a-re^{i\theta }}%
\right\vert \leq \frac{1}{b-r}-\lambda \frac{1}{a-r}.  \label{5}
\end{equation}%
Denoting $a_{1}=\frac{a}{r},b_{1}=\frac{b}{r}$ inequality (\ref{5}) becomes 
\begin{equation}
\left\vert \frac{1}{b_{1}-e^{i\theta }}-\lambda \frac{1}{a_{1}-e^{i\theta }}%
\right\vert \leq \frac{1}{b_{1}-1}-\lambda \frac{1}{a_{1}-1},\
a_{1}>b_{1}>1,\ \theta \in \lbrack 0,2\pi ].  \label{6}
\end{equation}%
Let the function $\varphi :[0,1]\rightarrow \mathbb{R}$ be defined by 
\begin{equation*}
\varphi (\lambda )=\left( \frac{1}{b_{1}-1}-\lambda \frac{1}{a_{1}-1}\right)
^{2}-\left\vert \frac{1}{b_{1}-e^{i\theta }}-\lambda \frac{1}{%
a_{1}-e^{i\theta }}\right\vert ^{2}.
\end{equation*}%
By denoting $t=\cos \theta ,$ the function $\varphi $ can be rewritten in
the following form%
\begin{eqnarray*}
\varphi (\lambda ) &=&\left( \frac{1}{b_{1}-1}-\lambda \frac{1}{a_{1}-1}%
\right) ^{2}-\frac{1}{b_{1}^{2}-2b_{1}t+1}-\lambda ^{2}\frac{1}{%
a_{1}^{2}-2a_{1}t+1} \\
&&+2\lambda \frac{a_{1}b_{1}-(a_{1}+b_{1})t+1}{%
(a_{1}^{2}-2a_{1}t+1)(b_{1}^{2}-2b_{1}t+1)}.
\end{eqnarray*}%
We have that 
\begin{eqnarray*}
\varphi ^{\prime }(\lambda ) &=&2\left( \frac{1}{b_{1}-1}-\lambda \frac{1}{%
a_{1}-1}\right) \left( -\frac{1}{a_{1}-1}\right) -2\lambda \frac{1}{%
a_{1}^{2}-2a_{1}t+1} \\
&&+2\frac{a_{1}b_{1}-(a_{1}+b_{1})t+1}{%
(a_{1}^{2}-2a_{1}t+1)(b_{1}^{2}-2b_{1}t+1)}
\end{eqnarray*}%
and 
\begin{equation*}
\varphi ^{\prime \prime }(\lambda )=2\frac{1}{(a_{1}-1)^{2}}-2\frac{1}{%
a_{1}^{2}-2a_{1}t+1}>0.
\end{equation*}%
Thus $\varphi ^{\prime }$ is strictly increasing on $[0,1],$ and
consequently if $\varphi ^{\prime }(1)<0,$ then $\varphi ^{\prime }(\lambda
)<0,\ \lambda \in \lbrack 0,1].$\newline
Some calculations lead to 
\begin{equation*}
\varphi ^{\prime }(1)=2(a_{1}-b_{1})\left( \frac{b_{1}-t}{%
(a_{1}^{2}-2a_{1}t+1)(b_{1}^{2}-2b_{1}t+1)}-\frac{1}{(a_{1}-1)^{2}(b_{1}-1)}%
\right) .
\end{equation*}%
If 
\begin{equation*}
m(t)=\frac{b_{1}-t}{(a_{1}^{2}-2a_{1}t+1)(b_{1}^{2}-2b_{1}t+1)},
\end{equation*}%
then 
\begin{equation*}
\varphi ^{\prime }(1)=2(a_{1}-b_{1})\left( m(t)-m(1)\right) .
\end{equation*}%
Since 
\begin{equation*}
m^{\prime }(t)=\frac{%
(a_{1}^{2}-2a_{1}t+1)(b_{1}^{2}-1)+2a_{1}(b_{1}^{2}-2b_{1}t+1)(b_{1}-t)}{%
(a_{1}^{2}-2a_{1}t+1)^{2}(b_{1}^{2}-2b_{1}t+1)^{2}}>0,
\end{equation*}%
it follows that $m(t)\leq {m}(1)$ and consequently $\varphi ^{\prime }(1)<0,$
and $\varphi ^{\prime }(\lambda )<0,\ \lambda \in \lbrack 0,1].$ This
implies that $\varphi $ is strictly decreasing and $\varphi (\lambda )\geq
\varphi (1)$ or equivalently%
\begin{eqnarray*}
&&\left( \frac{1}{b_{1}-1}-\lambda \frac{1}{a_{1}-1}\right) ^{2}-\left( 
\frac{1}{b_{1}-e^{i\theta }}-\lambda \frac{1}{a_{1}-e^{i\theta }}\right) ^{2}
\\
&\geq &\left( \frac{1}{b_{1}-1}-\frac{1}{a_{1}-1}\right) ^{2}-\left( \frac{1%
}{b_{1}-e^{i\theta }}-\frac{1}{a_{1}-e^{i\theta }}\right) ^{2},\ \ \ \theta
\in \lbrack 0,2\pi ].
\end{eqnarray*}%
Thus in order to prove (\ref{6}) we have to show that 
\begin{equation}
\frac{1}{b_{1}-1}-\frac{1}{a_{1}-1}\geq \left\vert \frac{1}{b_{1}-e^{i\theta
}}-\frac{1}{a_{1}-e^{i\theta }}\right\vert ,\ a_{1}>b_{1}>1,\ \theta \in
\lbrack 0,2\pi ]  \label{7}
\end{equation}%
or equivalently%
\begin{equation}
\frac{1}{(a_{1}-1)(b_{1}-1)}\geq \frac{1}{|(a_{1}-e^{i\theta
})(b_{1}-e^{i\theta })|},\ a_{1}>b_{1}>1,\ \theta \in \lbrack 0,2\pi ].
\label{71}
\end{equation}

Since the inequalities $\left\vert {a_{1}}-e^{i\theta }\right\vert \geq {%
a_{1}-1}$ and $|{b_{1}}-e^{i\theta }|\geq {b_{1}-1}$ for $a_{1}>b_{1}>1,\
\theta \in \lbrack 0,2\pi ]$ holds we get (\ref{71}) and thus (\ref{7}) . We
mention that (\ref{3}) and (\ref{4}) have been proved in \cite{Ba1} using a
direct method.

\textit{ii. }According to the maximum principle it is enough to prove the
inequality (\ref{41}) in case of $z=re^{i\theta },$ that is 
\begin{equation}
\left\vert \frac{1}{(a+re^{i\theta })(b-re^{i\theta })}\right\vert \leq 
\frac{1}{(a-r)(b+r)}.  \label{eq6}
\end{equation}%
Denoting $\alpha =\frac{a}{r}$ and $\beta =\frac{b}{r},$ the inequality (\ref%
{eq6}) can be rewritten as follows 
\begin{equation}
\left\vert \frac{1}{(\alpha +e^{i\theta })(\beta -e^{i\theta })}\right\vert
\leq \frac{1}{(\alpha -1)(\beta +1)},  \label{eq7}
\end{equation}%
where $\beta >\alpha >1.$ We will prove the inequality (\ref{eq7}). If $%
t=\cos \theta ,$ then this inequality will be equivalent to 
\begin{equation}
(\beta ^{2}-2\beta {t}+1)(\alpha ^{2}+2\alpha {t}+1)\geq {(\beta
+1)^{2}(\alpha -1)^{2}},\ \ \beta >\alpha >1,\ t\in \lbrack -1,1].
\label{eq9}
\end{equation}%
In order to prove inequality (\ref{eq9}) we define the function 
\begin{equation*}
u:[-1,1]\rightarrow \mathbb{R},\ \ u(t)=(\beta ^{2}-2\beta {t}+1)(\alpha
^{2}+2\alpha {t}+1).
\end{equation*}%
Since $u^{\prime \prime }(t)=-8\alpha \beta <0,\ t\in \lbrack -1,1]$ it
follows that $u$ is a concave mapping, and consequently 
\begin{equation*}
u(t)\geq \min \{u(1),u(-1)\}=u(-1)=(\beta +1)^{2}(\alpha -1)^{2}.
\end{equation*}%
Thus the proof of the inequality (\ref{41}) is done.\newline
\end{proof}

\section{Main Results}

The Bessel function of the first kind of order $\nu $ is defined by \cite[p.
217]{Olver} 
\begin{equation*}
J_{\nu }(z)=\sum_{n\geq 0}\frac{(-1)^{n}}{n!\Gamma (n+\nu +1)}\left( \frac{z%
}{2}\right) ^{2n+\nu }.
\end{equation*}%
In this paper we deal with the following normalized forms 
\begin{equation*}
f_{\nu }(z)=\left( 2^{\nu }\Gamma (\nu +1)J_{\nu }(z)\right) ^{\frac{1}{\nu }%
}=z-\frac{1}{4\nu (\nu +1)}z^{3}+\dots ,\ \nu \neq 0,
\end{equation*}%
\begin{equation*}
g_{\nu }(z)=2^{\nu }\Gamma (\nu +1)z^{1-{\nu }}J_{\nu }(z)=z-\frac{1}{4(\nu
+1)}z^{3}+\frac{1}{32(\nu +1)(\nu +2)}z^{5}-\dots ,
\end{equation*}%
\begin{equation*}
h_{\nu }(z)=2^{\nu }\Gamma (\nu +1)z^{1-\frac{\nu }{2}}J_{\nu }(\sqrt{z})=z-%
\frac{1}{4(\nu +1)}z^{2}+\dots ,
\end{equation*}%
where $\nu >-1.$ Observe that $f_{\nu },g_{\nu },h_{\nu }\in \mathcal{A}.$
We note that 
\begin{equation*}
f_{\nu }(z)=\exp \left( \frac{1}{\nu }\log \left( 2^{\nu }\Gamma (\nu
+1)J_{\nu }(z)\right) \right) ,
\end{equation*}%
where $\log $ represents the principal branch of the logarithm.

Using the proved inequalities we will determine the smallest value $\delta $
such that the inequality $\nu \geq \delta $ implies the uniform convexity in 
$U$ of different normalized Bessel functions. The radius of uniform
convexity of the Bessel functions of the first kind are also determined.
These two questions are closely connected.

Here and in the sequel $I_{\nu }$ denotes the modified Bessel function of
the fist kind and order $\nu .$ Note that $I_{\nu }(z)=i^{-\nu }J_{\nu }(iz)$
and $I_{\nu }(\sqrt{z})=(-1)^{-\frac{\nu }{2}}J_{\nu }(\sqrt{-z})$.

\subsection{\textbf{The radius of uniform convexity of normalized Bessel
functions}}

As far as we know the first results regarding the starlikeness of Bessel
functions have been given in \cite{Br} and \cite{KrT}. These two papers
initiated a research to study the univalence of Bessel functions and
determine the radius of starlikeness for different kind of normalizations.
These results raise questions about other geometric properties of Bessel
functions, like convexity, uniform convexity and etc. Recently, Baricz et
al. \cite{Ba0}, Baricz and Szász \cite{Ba1} and Baricz et al. \cite{Ba2}
obtained, respectively, the radius of starlikeness of order $\beta ,$ the
radius of convexity of order $\beta $ and the radius of $\alpha -$convexity
of order $\beta $ for the functions $f_{\nu }(z),$ $g_{\nu }(z)$ and $h_{\nu
}(z)$ in the case when $\nu >-1.$ On the other hand, we know that if $\nu
\in (-2,-1),$ then the Bessel function has exactly two purely imaginary
conjugate complex zeros, and all the other zeros are real (see \cite{Wat} p.
483). Thus in order to solve the above radius problems in case $\nu \in
(-2,-1),$ the method which has been used in \cite{Ba0,Ba1,Ba2} is not
applicable directly. In \cite{Sza}, Szász investigated the radius of
starlikeness of order $\beta $ for the functions $g_{\nu }(z)$ and $h_{\nu
}(z)$ in the case when $\nu \in (-2,-1)$ by using some inequalities. Baricz
and Szász \cite{Ba3} obtained the radius of convexity of order $\beta $ for
the functions $g_{\nu }(z)$ and $h_{\nu }(z)$ in the case when $\nu \in
(-2,-1).$ Very Recently, Deniz et al. \cite{De} investigated the radius of $%
\alpha -$convexity of order $\beta $ for the functions $g_{\nu }(z)$ and $%
h_{\nu }(z)$ in the case when $(-2,-1).$ In the paper \cite{1} the radius of
starlikeness and the radius of convexity of $q-$Bessel functions have also
been determined . In this section, we deal with the radius of uniform
convexity for the normalized Bessel functions $f_{\nu }(z),$ $g_{\nu }(z)$
and $h_{\nu }(z)$ in the case when $\nu >-2$ $\left( \nu \neq -1\right) .$

\begin{theorem}
\label{th2} If $\nu >0,$ then the radius of uniform convexity of the
function $f_{\nu }$ is the smallest positive root of the equation 
\begin{equation*}
1+2\frac{(r^{2}-\nu ^{2})J_{\nu }(r)}{rJ_{\nu }^{\prime }(r)}+2\left( 1-%
\frac{1}{\nu }\right) \frac{rJ_{\nu }^{\prime }(r)}{J_{\nu }(r)}=0.
\end{equation*}%
Moreover $r^{uc}(f_{\nu })<r^{c}(f_{\nu })<j_{\nu ,1}^{\prime }<j_{\nu ,1},$
where $j_{\nu ,1}$ and $j_{\nu ,1}^{\prime }$ denote the first positive
zeros of $J_{\nu }$ and $J_{\nu }^{\prime },$ respectively and $r^{c}(f_{\nu
})$ is the radius of convexity of the function $f_{\nu }.$
\end{theorem}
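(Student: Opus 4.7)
The plan is to recast the uniform convexity inequality from Theorem \ref{T1} as a termwise estimate over the zeros of $J_\nu$ and $J'_\nu$, and then apply Lemma \ref{L1} to reduce everything to a scalar inequality in $r$. Starting from $f_\nu=\phi^{1/\nu}$ with $\phi(z)=2^\nu\Gamma(\nu+1)J_\nu(z)$, a direct logarithmic differentiation gives
\[
\frac{zf_\nu''(z)}{f_\nu'(z)}=\frac{zJ_\nu''(z)}{J_\nu'(z)}+\left(\frac{1}{\nu}-1\right)\frac{zJ_\nu'(z)}{J_\nu(z)},
\]
and using the Weierstrass factorizations of $J_\nu$ and $J'_\nu$---whose positive zeros interlace as $j'_{\nu,n}<j_{\nu,n}$---the right-hand side rewrites as
\[
\frac{zf_\nu''(z)}{f_\nu'(z)}=-2\sum_{n\geq 1}\left[\frac{z^2}{(j'_{\nu,n})^2-z^2}-\left(1-\frac{1}{\nu}\right)\frac{z^2}{j_{\nu,n}^2-z^2}\right].
\]

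For $|z|\leq r<j'_{\nu,1}$ each bracket will be controlled by Lemma \ref{L1}. When $\nu\geq 1$, part (i) applied with $z^2$ in place of $z$, $b=(j'_{\nu,n})^2$, $a=j_{\nu,n}^2$, and $\lambda=1-1/\nu\in[0,1]$ majorizes both the real part and the modulus of the bracket by $\frac{r^2}{(j'_{\nu,n})^2-r^2}-(1-\frac{1}{\nu})\frac{r^2}{j_{\nu,n}^2-r^2}$. When $0<\nu<1$ the coefficient $1-1/\nu$ is negative so the two fractions combine additively, and inequality (\ref{4}) with the triangle inequality delivers the same majorant. Writing $B_n$ for the $n$-th bracket and summing yields
\[
\operatorname{Re}\!\left(1+\frac{zf_\nu''(z)}{f_\nu'(z)}\right)-\left|\frac{zf_\nu''(z)}{f_\nu'(z)}\right|\geq 1-4\sum_{n\geq 1}\left[\frac{r^2}{(j'_{\nu,n})^2-r^2}-\left(1-\frac{1}{\nu}\right)\frac{r^2}{j_{\nu,n}^2-r^2}\right].
\]

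The right-hand sums can be evaluated in closed form via logarithmic derivatives of the Weierstrass products together with Bessel's equation, giving
\[
\sum_{n\geq 1}\frac{2r^2}{j_{\nu,n}^2-r^2}=\nu-\frac{rJ_\nu'(r)}{J_\nu(r)},\qquad \sum_{n\geq 1}\frac{2r^2}{(j'_{\nu,n})^2-r^2}=\nu+\frac{(r^2-\nu^2)J_\nu(r)}{rJ_\nu'(r)}.
\]
A short simplification identifies the preceding lower bound with $-q(r)$, where $q(r):=1+2\frac{(r^2-\nu^2)J_\nu(r)}{rJ_\nu'(r)}+2(1-\frac{1}{\nu})\frac{rJ_\nu'(r)}{J_\nu(r)}$, so that $f_\nu$ is uniformly convex on $U(r)$ whenever $q(r)<0$. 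Sharpness at $z=r>0$ follows because on the positive real axis every $B_n$ is real of a fixed sign, so equality is attained in (\ref{3}) and the triangle estimate; consequently $q(r)=0$ is also necessary.

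Finally, $q(0^+)=-1$ (using $rJ_\nu'(r)\sim\nu J_\nu(r)\sim\nu r^\nu/(2^\nu\Gamma(\nu+1))$) while $q(r)\to+\infty$ as $r\to(j'_{\nu,1})^-$, because $J_\nu'(r)\to 0^+$ and $(r^2-\nu^2)J_\nu(r)>0$ there (since $j'_{\nu,1}>\nu$ and $J_\nu>0$ on $(0,j_{\nu,1})$). Continuity then yields a smallest positive root of $q$ in $(0,j'_{\nu,1})$, which is $r^{uc}(f_\nu)$, and the chain $r^{uc}(f_\nu)<r^c(f_\nu)<j'_{\nu,1}<j_{\nu,1}$ follows because uniform convexity is strictly stronger than convexity and the bound $r^c(f_\nu)<j'_{\nu,1}$ is already known from \cite{Ba1}. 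The main obstacle is reconciling the two parameter ranges $\nu\geq 1$ and $0<\nu<1$ into a single clean majorant, and verifying that the termwise estimates from Lemma \ref{L1} are simultaneously sharp at $z=r$---this is what promotes the sufficient condition into the exact equation stated in the theorem.
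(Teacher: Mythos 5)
Your proposal is correct and follows essentially the same route as the paper: the same series identity for $zf_\nu''/f_\nu'$ from \cite{Ba1}, the same case split (Lemma \ref{L1}(i) with $\lambda=1-1/\nu$ for $\nu\geq 1$, inequality (\ref{4}) plus the triangle inequality for $0<\nu<1$), sharpness at $z=r$, and reduction to the stated equation via Bessel's differential equation. The only cosmetic differences are that you evaluate the two Rayleigh-type sums in closed form rather than recognizing the majorant directly as $1+2rf_\nu''(r)/f_\nu'(r)$, and you locate the root by the intermediate value theorem where the paper uses strict monotonicity of $\psi_\nu$.
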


\begin{proof}
Let $j_{\nu ,n}$ and $j_{\nu ,n}^{\prime }$ are the $n$-th positive roots of 
$J_{\nu }$ and $J_{\nu }^{\prime },$ respectively. In \cite[p.11]{Ba1} the
following equality was proved 
\begin{equation*}
1+\frac{zf_{\nu }^{\prime \prime }(z)}{f_{\nu }^{\prime }(z)}=1-\left( \frac{%
1}{\nu }-1\right) \sum_{n\geq 1}\frac{2z^{2}}{j_{\nu ,n}^{2}-z^{2}}%
-\sum_{n\geq 1}\frac{2z^{2}}{j_{\nu ,n}^{\prime 2}-z^{2}}.
\end{equation*}%
We will prove the theorem in two steps.

First suppose $\nu \geq 1.$ In this case we will use the property of the
zeros $j_{\nu ,n}$ and $j_{\nu ,n}^{\prime },$ that interlace according to
the inequalities 
\begin{equation*}
\nu \leq j_{\nu ,1}^{\prime }<j_{\nu ,1}<j_{\nu ,2}^{\prime }<j_{\nu
,2}<j_{\nu ,3}^{\prime }<{\dots }.
\end{equation*}%
Putting $\lambda =1-\frac{1}{\nu },$ inequality (\ref{3}) implies $\func{Re}%
\left( \frac{2z^{2}}{j_{\nu ,n}^{\prime 2}-z^{2}}-\left( 1-\frac{1}{\nu }%
\right) \frac{2z^{2}}{j_{\nu ,n}^{2}-z^{2}}\right) \leq \frac{2r^{2}}{j_{\nu
,n}^{\prime 2}-r^{2}}-\left( 1-\frac{1}{\nu }\right) \frac{2r^{2}}{j_{\nu
,n}^{2}-r^{2}},$ for $|z|\leq {r}<j_{\nu ,1}^{\prime }<j_{\nu ,1},$ and we
get%
\begin{eqnarray}
\func{Re}\left( 1+\frac{zf_{\nu }^{\prime \prime }(z)}{f_{\nu }^{\prime }(z)}%
\right) &=&1-\sum_{n\geq 1}\func{Re}\left( \frac{2z^{2}}{j_{\nu ,n}^{\prime
2}-z^{2}}-\left( 1-\frac{1}{\nu }\right) \frac{2z^{2}}{j_{\nu ,n}^{2}-z^{2}}%
\right)  \label{8} \\
&\geq &1-\sum_{n\geq 1}\left( \frac{2r^{2}}{j_{\nu ,n}^{\prime 2}-r^{2}}%
-\left( 1-\frac{1}{\nu }\right) \frac{2r^{2}}{j_{\nu ,n}^{2}-r^{2}}\right) 
\notag \\
&=&1+\frac{rf_{\nu }^{\prime \prime }(r)}{f_{\nu }^{\prime }(r)}.  \notag
\end{eqnarray}%
On the other hand if in the inequality (\ref{2}) we replace $z$ by $z^{2}$
and we put again $\lambda =1-\frac{1}{\nu },$ then it follows that $%
\left\vert \frac{2z^{2}}{j_{\nu ,n}^{\prime 2}-z^{2}}-\left( 1-\frac{1}{\nu }%
\right) \frac{2z^{2}}{j_{\nu ,n}^{2}-z^{2}}\right\vert \leq \frac{2r^{2}}{%
j_{\nu ,n}^{\prime 2}-r^{2}}-\left( 1-\frac{1}{\nu }\right) \frac{2r^{2}}{%
j_{\nu ,n}^{2}-r^{2}}$ provided that $|z|\leq {r}<j_{\nu ,1}^{\prime
}<j_{\nu ,1}.$ Thus, we have%
\begin{eqnarray}
\left\vert \frac{zf_{\nu }^{\prime \prime }(z)}{f_{\nu }^{\prime }(z)}%
\right\vert &=&\left\vert \sum_{n\geq 1}\left( \frac{2z^{2}}{j_{\nu
,n}^{\prime 2}-z^{2}}-\left( 1-\frac{1}{\nu }\right) \frac{2z^{2}}{j_{\nu
,n}^{2}-z^{2}}\right) \right\vert  \label{9} \\
&\leq &\sum_{n\geq 1}\left\vert \frac{2z^{2}}{j_{\nu ,n}^{\prime 2}-z^{2}}%
-\left( 1-\frac{1}{\nu }\right) \frac{2z^{2}}{j_{\nu ,n}^{2}-z^{2}}%
\right\vert  \notag \\
&\leq &\sum_{n\geq 1}\left( \frac{2r^{2}}{j_{\nu ,n}^{\prime 2}-r^{2}}%
-\left( 1-\frac{1}{\nu }\right) \frac{2r^{2}}{j_{\nu ,n}^{2}-r^{2}}\right) =-%
\frac{rf_{\nu }^{\prime \prime }(r)}{f_{\nu }^{\prime }(r)}.  \notag
\end{eqnarray}%
In the second step we will prove that inequalities (\ref{8}) and (\ref{9})
hold in the case $\nu \in (0,1)$ too. Indeed in the case $\nu \in (0,1)$ the
roots $0<j_{\nu ,n}^{\prime }<j_{\nu ,n}$ are real for every natural number $%
n.$ Inequality (\ref{4}) implies 
\begin{equation*}
\func{Re}\frac{2z^{2}}{j_{\nu ,n}^{\prime 2}-z^{2}}\leq \left\vert \frac{%
2z^{2}}{j_{\nu ,n}^{\prime 2}-z^{2}}\right\vert \leq \frac{2r^{2}}{j_{\nu
,n}^{\prime 2}-r^{2}},\ |z|\leq {r}<j_{\nu ,1}^{\prime }<j_{\nu ,1}
\end{equation*}%
and 
\begin{equation*}
\func{Re}\frac{2z^{2}}{j_{\nu ,n}^{2}-z^{2}}\leq \left\vert \frac{2z^{2}}{%
j_{\nu ,n}^{2}-z^{2}}\right\vert \leq \frac{2r^{2}}{j_{\nu ,n}^{2}-r^{2}},\
|z|\leq {r}<j_{\nu ,1}^{\prime }<j_{\nu ,1}.
\end{equation*}%
Since $\frac{1}{\nu }-1>0,$ the previous inequalities imply that%
\begin{eqnarray}
\func{Re}\left( 1+\frac{zf_{\nu }^{\prime \prime }(z)}{f_{\nu }^{\prime }(z)}%
\right) &=&1-\sum_{n\geq 1}\func{Re}\left( \frac{2z^{2}}{j_{\nu ,n}^{\prime
2}-z^{2}}\right) -\left( \frac{1}{\nu }-1\right) \sum_{n\geq 1}\func{Re}%
\left( \frac{2z^{2}}{j_{\nu ,n}^{2}-z^{2}}\right)  \label{10} \\
&\geq &1-\sum_{n\geq 1}\frac{2r^{2}}{j_{\nu ,n}^{\prime 2}-r^{2}}-\left( 
\frac{1}{\nu }-1\right) \sum_{n\geq 1}\frac{2r^{2}}{j_{\nu ,n}^{2}-r^{2}} 
\notag \\
&=&1+\frac{rf_{\nu }^{\prime \prime }(r)}{f_{\nu }^{\prime }(r)}.\ \ \  
\notag
\end{eqnarray}%
Now if in the second part of inequality (\ref{4} ) we replace $z$ by $z^{2},$
and $b$ by $j_{\nu ,n}^{\prime }$ and by $j_{\nu ,n}$, respectively, then it
follows that $\left\vert \frac{2z^{2}}{j_{\nu ,n}^{\prime 2}-z^{2}}%
\right\vert \leq \frac{2r^{2}}{j_{\nu ,n}^{\prime 2}-r^{2}},$ and $%
\left\vert \frac{2z^{2}}{j_{\nu ,n}^{2}-z^{2}}\right\vert \leq \frac{2r^{2}}{%
j_{\nu ,n}^{2}-r^{2}},$ provided that $|z|\leq {r}<j_{\nu ,1}^{\prime
}<j_{\nu ,1}.$\newline
These two inequalities and the condition $\frac{1}{\nu }-1>0,$ imply%
\begin{eqnarray}
\left\vert \frac{zf_{\nu }^{\prime \prime }(z)}{f_{\nu }^{\prime }(z)}%
\right\vert &=&\left\vert \sum_{n\geq 1}\left( \frac{2z^{2}}{j_{\nu
,n}^{\prime 2}-z^{2}}+\left( \frac{1}{\nu }-1\right) \frac{2z^{2}}{j_{\nu
,n}^{2}-z^{2}}\right) \right\vert  \label{11} \\
&\leq &\sum_{n\geq 1}\left\vert \frac{2z^{2}}{j_{\nu ,n}^{\prime 2}-z^{2}}%
\right\vert +\left( \frac{1}{\nu }-1\right) \sum_{n\geq 1}\left\vert \frac{%
2z^{2}}{j_{\nu ,n}^{2}-z^{2}}\right\vert  \notag \\
&\leq &\sum_{n\geq 1}\left( \frac{2r^{2}}{j_{\nu ,n}^{\prime 2}-r^{2}}%
+\left( \frac{1}{\nu }-1\right) \frac{2r^{2}}{j_{\nu ,n}^{2}-r^{2}}\right) =-%
\frac{rf_{\nu }^{\prime \prime }(r)}{f_{\nu }^{\prime }(r)}.  \notag
\end{eqnarray}%
Finally from (\ref{8}) and (\ref{9}) we infer 
\begin{equation}
\func{Re}\left( 1+\frac{zf_{\nu }^{\prime \prime }(z)}{f_{\nu }^{\prime }(z)}%
\right) -\left\vert \frac{zf_{\nu }^{\prime \prime }(z)}{f_{\nu }^{\prime
}(z)}\right\vert \geq 1+2\frac{rf_{\nu }^{\prime \prime }(r)}{f_{\nu
}^{\prime }(r)},\ |z|\leq {r}<j_{\nu ,1}^{\prime },  \label{111}
\end{equation}%
and (\ref{10}), (\ref{11}) also lead to the previous inequality. The
equality holds if and only if $z=r.$ Thus it follows 
\begin{equation*}
\inf_{|z|<r}\left[ \func{Re}\left( 1+\frac{zf_{\nu }^{\prime \prime }(z)}{%
f_{\nu }^{\prime }(z)}\right) -\left\vert \frac{zf_{\nu }^{\prime \prime }(z)%
}{f_{\nu }^{\prime }(z)}\right\vert \right] =1+2\frac{rf_{\nu }^{\prime
\prime }(r)}{f_{\nu }^{\prime }(r)},\ \ r\in (0,j_{\nu ,1}^{\prime }).
\end{equation*}%
The mapping $\psi _{\nu }:(0,j_{\nu ,1}^{\prime })\rightarrow \mathbb{R}$
defined by%
\begin{equation*}
\psi _{\nu }(r)=1+2\frac{rf_{\nu }^{\prime \prime }(r)}{f_{\nu }^{\prime }(r)%
}=1-2\sum_{n\geq 1}\left( \frac{2r^{2}}{j_{\nu ,n}^{\prime 2}-r^{2}}-\left(
1-\frac{1}{\nu }\right) \frac{2r^{2}}{j_{\nu ,n}^{2}-r^{2}}\right)
\end{equation*}%
is strictly decreasing, $\lim_{r\searrow 0}\psi _{\nu }(r)=1$ and $%
\lim_{r\nearrow {j_{\nu ,n}^{\prime }}}\psi _{\nu }(r)=-\infty .$ Thus it
follows that the equation $1+2\frac{rf_{\nu }^{\prime \prime }(r)}{f_{\nu
}^{\prime }(r)}=0$ has a unique root $r_{0}\in (0,j_{\nu ,n}^{\prime })$ and 
$r_{0}=r^{uc}(f_{\nu }).$ Since $1+2\frac{rf_{\nu }^{\prime \prime }(r)}{%
f_{\nu }^{\prime }(r)}=1+2\frac{rJ_{\nu }^{\prime \prime }(r)}{J_{\nu
}^{\prime }(r)}+2\left( \frac{1}{\nu }-1\right) \frac{rJ_{\nu }^{\prime }(r)%
}{J_{\nu }(r)}$ and using the Bessel differential equation $z^{2}J_{\nu
}^{\prime \prime }(z)+zJ_{\nu }{}^{\prime }(z)+\left( 1-\nu ^{2}\right)
J_{\nu }(z)=0,$ the proof is done.
\end{proof}

\FRAME{dhFU}{3.1168in}{2.002in}{0pt}{\Qcb{The graph of the function $\protect%
\nu \mapsto 1+2\frac{(r^{2}-\protect\nu ^{2})J_{\protect\nu }(r)}{rJ_{%
\protect\nu }^{\prime }(r)}+2\left( 1-\frac{1}{\protect\nu }\right) \frac{%
rJ_{\protect\nu }^{\prime }(r)}{J_{\protect\nu }(r)}$ for $\protect\nu \in
\{0.5,1,1.5,2.5\}$ on $\left[ 0,1\right] $}}{}{Figure 1}{\special{language
"Scientific Word";type "GRAPHIC";display "USEDEF";valid_file "T";width
3.1168in;height 2.002in;depth 0pt;original-width 5.0004in;original-height
3.1142in;cropleft "0";croptop "1";cropright "1";cropbottom "0";tempfilename
'OH5TDI00.wmf';tempfile-properties "PR";}}

\begin{theorem}
\label{th3} \textbf{i.} If $\nu >-1,$ then the radius of uniform convexity
of the function $g_{\nu }$ is the smallest positive root of the equation 
\begin{equation*}
1+2r\frac{(2\nu -1)J_{\nu +1}(r)-rJ_{\nu }(r)}{J_{\nu }(r)-rJ_{\nu +1}(r)}=0.
\end{equation*}%
Moreover, $r_{\alpha }^{c}(g_{\nu })<\alpha _{\nu ,1}<j_{\nu ,1},$ where $%
\alpha _{\nu ,1}$ is the first positive zero of the Dini function $z\mapsto
(1-\nu )J_{\nu }(z)+zJ_{\nu }^{\prime }(z).$

\textbf{ii.} If $\nu \in (-2,-1),$ then the radius of uniform convexity of
the function $g_{\nu }$ is$\ r^{uc}(g_{\nu }),$ where $r^{uc}(g_{\nu })$ is
the unique root of the equation 
\begin{equation*}
1+2r\frac{rI_{\nu }(r)-(2\nu -1)I_{\nu +1}(r)}{I_{\nu }(r)+rI_{\nu +1}(r)}=0,
\end{equation*}%
in the interval $(0,a).$
\end{theorem}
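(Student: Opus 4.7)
The plan is to mimic the structure of the proof of Theorem \ref{th2}, starting from the Hadamard factorization of $g_\nu'$ rather than $f_\nu'$. Since $g_\nu$ is odd, $g_\nu'$ is an even entire function with $g_\nu'(0)=1$ and factors as
\[
g_\nu'(z)=\prod_{n\geq 1}\left(1-\frac{z^2}{\gamma_{\nu,n}^2}\right),
\]
where $\pm\gamma_{\nu,n}$ run through its zeros. A short computation using $J_\nu'(z)=(\nu/z)J_\nu(z)-J_{\nu+1}(z)$ shows that $g_\nu'(z)$ is a constant multiple of $z^{-\nu}[J_\nu(z)-zJ_{\nu+1}(z)]$, so logarithmic differentiation yields
\[
1+\frac{zg_\nu''(z)}{g_\nu'(z)}=1-\sum_{n\geq 1}\frac{2z^2}{\gamma_{\nu,n}^2-z^2}.
\]

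For part \textbf{i}, when $\nu>-1$ all zeros $\gamma_{\nu,n}=\alpha_{\nu,n}$ are real and positive with $\alpha_{\nu,1}<j_{\nu,1}$. I would apply inequality (\ref{4}) of Lemma \ref{L1} term by term (with $z$ replaced by $z^2$, $b=\alpha_{\nu,n}^2$) to obtain, for $|z|\leq r<\alpha_{\nu,1}$,
\[
\operatorname{Re}\frac{2z^2}{\alpha_{\nu,n}^2-z^2}\leq\left|\frac{2z^2}{\alpha_{\nu,n}^2-z^2}\right|\leq\frac{2r^2}{\alpha_{\nu,n}^2-r^2},
\]
which after summation and the triangle inequality gives
\[
\operatorname{Re}\left(1+\frac{zg_\nu''(z)}{g_\nu'(z)}\right)-\left|\frac{zg_\nu''(z)}{g_\nu'(z)}\right|\geq 1+\frac{2rg_\nu''(r)}{g_\nu'(r)},
\]
with equality at $z=r$. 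The function $\psi_\nu(r):=1+2rg_\nu''(r)/g_\nu'(r)$ is strictly decreasing on $(0,\alpha_{\nu,1})$, tends to $1$ as $r\searrow 0$ and to $-\infty$ as $r\nearrow\alpha_{\nu,1}$, so it has a unique root $r^{uc}(g_\nu)$. Rewriting $\psi_\nu(r)=0$ via the Bessel recurrences yields the stated equation, and the chain $r^{uc}(g_\nu)<\alpha_{\nu,1}<j_{\nu,1}$ is then immediate.

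For part \textbf{ii}, when $\nu\in(-2,-1)$ the underlying Dini-type function acquires a pair of purely imaginary conjugate zeros $\pm ia$, producing the extra factor $1+z^2/a^2$ in the Hadamard product, so
\[
1+\frac{zg_\nu''(z)}{g_\nu'(z)}=1+\frac{2z^2}{a^2+z^2}-\sum_{n\geq 1}\frac{2z^2}{\alpha_{\nu,n}^2-z^2}.
\]
Now the extremizer on $|z|=r$ switches from $z=r$ to $z=ir$: the imaginary-zero summand $2z^2/(a^2+z^2)$ has minimal real part and maximal modulus at $z=\pm ir$, opposite to the real-zero terms. My strategy is to pair this "bad" term with each real-zero summand and apply inequality (\ref{41}) of Lemma \ref{L1} with $z\mapsto z^2$, $a\mapsto a^2$, $b\mapsto \alpha_{\nu,n}^2$, whose content is precisely that $\bigl|1/[(a^2+z^2)(\alpha_{\nu,n}^2-z^2)]\bigr|$ is maximized at $z=\pm ir$. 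Combining these pair estimates should yield
\[
\operatorname{Re}\left(1+\frac{zg_\nu''(z)}{g_\nu'(z)}\right)-\left|\frac{zg_\nu''(z)}{g_\nu'(z)}\right|\geq 1+\frac{2(ir)g_\nu''(ir)}{g_\nu'(ir)},
\]
with equality at $z=ir$. The right-hand side is real, strictly decreasing on $(0,a)$, equals $1$ at $0$ and tends to $-\infty$ as $r\nearrow a$, so it has a unique root, which is $r^{uc}(g_\nu)\in(0,a)$. Finally, the identity $g_\nu(iz)=i\cdot 2^\nu\Gamma(\nu+1)z^{1-\nu}I_\nu(z)$ expresses $g_\nu'(ir)$ and $g_\nu''(ir)$ through $I_\nu(r)$ and $I_\nu'(r)$, and the recurrence $I_\nu'(z)=(\nu/z)I_\nu(z)+I_{\nu+1}(z)$ converts the equation at $z=ir$ into the form stated in part ii.

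The main obstacle is clearly the extremum analysis in part ii. A plain term-by-term application of inequality (\ref{4}) fails because the imaginary-zero summand and the real-zero summands are extremal at different points of $|z|=r$. The pairing device built on (\ref{41}) is what reconciles them, and verifying that the pair estimates actually sum to a sharp bound attained at $z=ir$, without losing information in the triangle inequality, is the most delicate step. Once this estimate is secured, the monotonicity of the resulting one-variable function and the Bessel-function rewriting are standard.
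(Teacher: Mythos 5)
Your part \textbf{i} is essentially the paper's argument: the same factorization $zg_{\nu}''(z)/g_{\nu}'(z)=-\sum_{n\ge1}2z^{2}/(\alpha_{\nu,n}^{2}-z^{2})$ (the paper quotes it from Baricz--Sz\'{a}sz rather than rederiving it from the Hadamard product, but that is immaterial), followed by term-by-term application of inequality (\ref{4}), the triangle inequality, and the monotonicity of $r\mapsto 1+2rg_{\nu}''(r)/g_{\nu}'(r)$ on $(0,\alpha_{\nu,1})$. No issues there.

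In part \textbf{ii} you have correctly diagnosed the difficulty (the term coming from the imaginary zero and the terms coming from the real zeros are extremal at antipodal points of $|z|=r$) and correctly guessed that inequality (\ref{41}) applied to products of the form $1/\bigl[(a^{2}+z^{2})(\alpha_{\nu,n}^{2}-z^{2})\bigr]$ is the tool that reconciles them. But the ``pairing device'' is exactly the step you leave unspecified, and it is not automatic: inequality (\ref{41}) bounds a \emph{product} of the two factors, whereas your decomposition presents a \emph{sum} of simple fractions, and there is no canonical way to split the single term $2z^{2}/(a^{2}+z^{2})$ among the infinitely many real-zero terms. The mechanism the paper uses is an exact algebraic rearrangement based on the Euler--Rayleigh sum $\sum_{n\ge1}\alpha_{\nu,n}^{-2}=3/(4(\nu+1))$, i.e.\ $a^{-2}=-3/(4(\nu+1))+\sum_{n\ge2}\alpha_{\nu,n}^{-2}$, which yields the identity
\begin{equation*}
1+\frac{2z^{2}}{a^{2}+z^{2}}-2\sum_{n\ge2}\frac{z^{2}}{\alpha_{\nu,n}^{2}-z^{2}}
=1-\frac{3a^{2}}{2(1+\nu)}\,\frac{z^{2}}{a^{2}+z^{2}}
-2\sum_{n\ge2}\frac{a^{2}+\alpha_{\nu,n}^{2}}{\alpha_{\nu,n}^{2}}\,
\frac{z^{4}}{(a^{2}+z^{2})(\alpha_{\nu,n}^{2}-z^{2})}.
\end{equation*}
Since $-3a^{2}/(2(1+\nu))>0$ for $\nu\in(-2,-1)$ and the coefficients $(a^{2}+\alpha_{\nu,n}^{2})/\alpha_{\nu,n}^{2}$ are positive, every summand in the right-hand decomposition attains its maximal modulus (and minimal real part) simultaneously at $z=\pm ir$ --- the first by direct inspection, the rest by (\ref{41}) with $z\mapsto z^{2}$, $a\mapsto a^{2}$, $b\mapsto\alpha_{\nu,n}^{2}$ --- and only then do the estimates sum to the sharp bound $1+2irg_{\nu}''(ir)/g_{\nu}'(ir)$. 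Without this identity (or an equivalent redistribution with nonnegative weights), your plan does not close; this is a genuine gap, not merely a deferred computation. Two smaller points: in your part-ii decomposition the sum should run over $n\ge2$, since $\alpha_{\nu,1}=ia$ is the extracted imaginary zero; and the real-part lower bound $\operatorname{Re}(1+zg_{\nu}''/g_{\nu}')\ge 1+irg_{\nu}''(ir)/g_{\nu}'(ir)$ also rests on the same rearranged decomposition, not on (\ref{4}) alone.
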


\begin{proof}
First we prove part \emph{i}\textbf{\ }for $\nu >-1$ and later part \emph{ii}%
\textbf{\ }for\textbf{\ }$\nu \in (-2,-1).$ In \cite[Lemma 2.4]{Ba1} has
been proven the equality 
\begin{equation}
\frac{zg_{\nu }^{\prime \prime }(z)}{g_{\nu }^{\prime }(z)}=z\frac{zJ_{\nu
+2}(z)-3J_{\nu +1}(z)}{J_{\nu }(z)-zJ_{\nu +1}(z)}=-\sum_{n\geq 1}\frac{%
2z^{2}}{\alpha _{\nu ,n}^{2}-z^{2}}  \label{12}
\end{equation}%
where $\alpha _{\nu ,n}$ is the $n$th positive zeros of the Dini function $%
z\mapsto (1-\nu )J_{\nu }(z)+zJ_{\nu }^{\prime }(z).$ Using this equality,
in \cite{Ba1} the following inequality has been proven 
\begin{equation}
\func{Re}\left( 1+\frac{zg_{\nu }^{\prime \prime }(z)}{g_{\nu }^{\prime }(z)}%
\right) \geq 1+\frac{rg_{\nu }^{\prime \prime }(r)}{g_{\nu }^{\prime }(r)},\
|z|\leq {r}<\alpha _{\nu ,1}.  \label{13}
\end{equation}%
Equality (\ref{12}) also implies that%
\begin{eqnarray}
\left\vert \frac{zg_{\nu }^{\prime \prime }(z)}{g_{\nu }^{\prime }(z)}%
\right\vert &=&\left\vert \sum_{n\geq 1}\frac{2z^{2}}{\alpha _{\nu
,n}^{2}-z^{2}}\right\vert \leq \sum_{n\geq 1}\left\vert \frac{2z^{2}}{\alpha
_{\nu ,n}^{2}-z^{2}}\right\vert \leq \sum_{n\geq 1}\frac{2r^{2}}{\alpha
_{\nu ,n}^{2}-r^{2}}  \label{14} \\
&=&-\frac{rg_{\nu }^{\prime \prime }(r)}{g_{\nu }^{\prime }(r)},\ |z|\leq {r}%
<\alpha _{\nu ,1}.\ \ \   \notag
\end{eqnarray}%
Now summarizing (\ref{13}) and (\ref{14}) we get 
\begin{equation}
\func{Re}\left( 1+\frac{zg_{\nu }^{\prime \prime }(z)}{g_{\nu }^{\prime }(z)}%
\right) -\left\vert \frac{zg_{\nu }^{\prime \prime }(z)}{g_{\nu }^{\prime
}(z)}\right\vert \geq 1+2\frac{rg_{\nu }^{\prime \prime }(r)}{g_{\nu
}^{\prime }(r)},\ |z|\leq {r}<\alpha _{\nu ,1}.  \label{141}
\end{equation}%
The equality holds if and only if $z=r.$ Finally we get 
\begin{equation*}
\inf_{|z|<r}\left[ \func{Re}\left( 1+\frac{zg_{\nu }^{\prime \prime }(z)}{%
g_{\nu }^{\prime }(z)}\right) -\left\vert \frac{zg_{\nu }^{\prime \prime }(z)%
}{g_{\nu }^{\prime }(z)}\right\vert \right] =1+2\frac{rg_{\nu }^{\prime
\prime }(r)}{g_{\nu }^{\prime }(r)},\ \ r\in (0,\alpha _{\nu ,1}).
\end{equation*}%
The mapping $\varphi _{\nu }:(0,\alpha _{\nu ,1})\rightarrow \mathbb{R}$
defined by $\varphi _{\nu }(r)=1+2\frac{rg_{\nu }^{\prime \prime }(r)}{%
g_{\nu }^{\prime }(r)}=1-2\sum_{n\geq 1}\frac{2r^{2}}{\alpha _{\nu
,n}^{2}-r^{2}}$ is strictly decreasing, $\lim_{r\searrow 0}\varphi _{\nu
}(r)=1$ and $\lim_{r\nearrow {\alpha _{\nu ,1}}}\varphi _{\nu }(r)=-\infty .$
By using the recurrence relation $2\nu J_{\nu }(z)=z\left[ J_{\nu
-1}(z)+J_{\nu +1}(z)\right] $, it follows that the equation $1+2\frac{%
rg_{\nu }^{\prime \prime }(r)}{g_{\nu }^{\prime }(r)}=1+2r\frac{(2\nu
-1)J_{\nu +1}(r)-rJ_{\nu }(r)}{J_{\nu }(r)-rJ_{\nu +1}(r)}=0$ has a unique
root $r_{0}\in (0,\alpha _{\nu ,1})$ and $r_{0}=r^{uc}(g_{\nu }).$

\textbf{ii. }By using the result of Hurwitz \cite[p. 305]{Wat} on zeros of
Bessel functions of the first kind, the condition $\nu \in (-2,-1)$ implies $%
\alpha _{\nu ,1}=ia,\;a>0$ and $\alpha _{\nu ,n}>0$ for $n\in \{2,3,...\}.$
Thus, from equality (\ref{12}), we have 
\begin{eqnarray}
1+\frac{zg_{\nu }^{\prime \prime }(z)}{g_{\nu }^{\prime }(z)} &=&1+\frac{%
2z^{2}}{a^{2}+z^{2}}-2\sum_{n\geq 2}\frac{z^{2}}{\alpha _{v,n}^{2}-z^{2}}
\label{g1} \\
&=&1-\frac{3a^{2}}{2(1+\nu )}\frac{z^{2}}{a^{2}+z^{2}}-2\sum_{n\geq 2}\frac{%
a^{2}+\alpha _{\nu ,n}^{2}}{\alpha _{\nu ,n}^{2}}\frac{z^{4}}{%
(a^{2}+z^{2})(\alpha _{\nu ,n}^{2}-z^{2})}  \notag
\end{eqnarray}%
Here, we used following equality (see \cite[p. 305]{Ba3})%
\begin{equation*}
\sum_{n=1}^{\infty }\frac{1}{\alpha _{v,n}^{2}}=\frac{3}{4(\nu +1)}\text{
and so }\frac{1}{a^{2}}=-\frac{3}{4(\nu +1)}+\sum_{n=2}^{\infty }\frac{1}{%
\alpha _{v,n}^{2}}.\text{ }
\end{equation*}

In \cite[p. 305]{Ba3} the following equality has been proven%
\begin{eqnarray}
\func{Re}\left( 1+\frac{zg_{\nu }^{\prime \prime }(z)}{g_{\nu }^{\prime }(z)}%
\right) &\geq &1+\frac{3a^{2}}{2(1+\nu )}\frac{r^{2}}{a^{2}+r^{2}}
\label{g2} \\
&&-2\sum_{n\geq 2}\frac{a^{2}+\alpha _{\nu ,n}^{2}}{\alpha _{\nu ,n}^{2}}%
\frac{r^{4}}{(a^{2}+r^{2})(\alpha _{\nu ,n}^{2}-r^{2})}  \notag \\
&=&1+\frac{irg_{\nu }^{\prime \prime }(ir)}{g_{\nu }^{\prime }(ir)}>0,\
|z|\leq r<a.  \notag
\end{eqnarray}%
On the other hand, if in inequality (\ref{41} ) we replace $z$ by $z^{2},$
and $b$ by $\alpha _{\nu ,n},$ taking in account that $-\frac{3a^{2}}{%
2(1+\nu )}>0,$ we get the following inequalities%
\begin{eqnarray}
\left\vert \frac{zg_{\nu }^{\prime \prime }(z)}{g_{\nu }^{\prime }(z)}%
\right\vert &=&\left\vert -\frac{3a^{2}}{2(1+\nu )}\frac{z^{2}}{a^{2}+z^{2}}%
-2\sum_{n\geq 2}\frac{a^{2}+\alpha _{\nu ,n}^{2}}{\alpha _{\nu ,n}^{2}}\frac{%
z^{4}}{(a^{2}+z^{2})(\alpha _{\nu ,n}^{2}-z^{2})}\right\vert  \label{g3} \\
&\leq &-\frac{3a^{2}}{2(1+\nu )}\left\vert \frac{z^{2}}{a^{2}+z^{2}}%
\right\vert +2\sum_{n\geq 2}\frac{a^{2}+\alpha _{\nu ,n}^{2}}{\alpha _{\nu
,n}^{2}}\left\vert \frac{z^{4}}{(a^{2}+z^{2})(\alpha _{\nu ,n}^{2}-z^{2})}%
\right\vert  \notag \\
&\leq &-\frac{3a^{2}}{2(1+\nu )}\frac{r^{2}}{a^{2}-r^{2}}+2\sum_{n\geq 2}%
\frac{a^{2}+\alpha _{\nu ,n}^{2}}{\alpha _{\nu ,n}^{2}}\frac{r^{4}}{%
(a^{2}-r^{2})(\alpha _{\nu ,n}^{2}+r^{2})}  \notag \\
&=&-\frac{irg_{\nu }^{\prime \prime }(ir)}{g_{\nu }^{\prime }(ir)},\ |z|\leq 
{r}<a.\ \ \   \notag
\end{eqnarray}

From inequalites (\ref{g2}) and (\ref{g3}) we get 
\begin{equation*}
\inf_{|z|<r}\left[ \func{Re}\left( 1+\frac{zg_{\nu }^{\prime \prime }(z)}{%
g_{\nu }^{\prime }(z)}\right) -\left\vert \frac{zg_{\nu }^{\prime \prime }(z)%
}{g_{\nu }^{\prime }(z)}\right\vert \right] =1+2\frac{irg_{\nu }^{\prime
\prime }(ir)}{g_{\nu }^{\prime }(ir)}
\end{equation*}%
for every $r\in (0,a).$ Now, consider the function $\Theta _{\nu
}:(0,a)\rightarrow \mathbb{R}$, defined by 
\begin{equation*}
\Theta _{\nu }(r)=1+2\frac{irg_{\nu }^{\prime \prime }(ir)}{g_{\nu }^{\prime
}(ir)}=1-\frac{4r^{2}}{a^{2}-r^{2}}+4\sum_{n\geq 2}\frac{r^{2}}{\alpha
_{v,n}^{2}+r^{2}}.
\end{equation*}

Since $\Theta _{\nu }(r)=1+2\frac{irg_{\nu }^{\prime \prime }(ir)}{g_{\nu
}^{\prime }(ir)}=1-\frac{4r^{2}}{a^{2}-r^{2}}+4\sum_{n\geq 2}\frac{r^{2}}{%
\alpha _{v,n}^{2}+r^{2}}$ is strictly decreasing, $\lim_{r\searrow 0}\Theta
_{\nu }(r)=1$ and $\lim_{r\nearrow {a}}\Theta _{\nu }(r)=-\infty $ it
follows that the equation $1+2\frac{irg_{\nu }^{\prime \prime }(ir)}{g_{\nu
}^{\prime }(ir)}=1+2r\frac{rI_{\nu }(r)-(2\nu -1)I_{\nu +1}(r)}{I_{\nu
}(r)+rI_{\nu +1}(r)}=0$ has a unique root $r^{uc}(g_{\nu })\in (0,a).$
\end{proof}

\begin{equation*}
\FRAME{itbpFU}{3.0113in}{1.8066in}{0in}{\Qcb{The graph of the function $%
\protect\nu \mapsto 1+2r\frac{(2\protect\nu -1)J_{\protect\nu +1}(r)-rJ_{%
\protect\nu }(r)}{J_{\protect\nu }(r)-rJ_{\protect\nu +1}(r)}$ for $\protect%
\nu \in \{-0.5,0,0.5,1.5\}$ on $\left[ 0,0.86\right] $}}{}{Figure}{\special%
{language "Scientific Word";type "GRAPHIC";display "USEDEF";valid_file
"T";width 3.0113in;height 1.8066in;depth 0in;original-width
5.0004in;original-height 3.1142in;cropleft "0";croptop "0.9815";cropright
"0.9857";cropbottom "0";tempfilename 'OH5TDI01.wmf';tempfile-properties
"PR";}}\text{ \ \ \ \ }\FRAME{itbpFU}{2.93in}{1.8308in}{0in}{\Qcb{The graph
of the function $\protect\nu \mapsto 1+2r\frac{rI_{\protect\nu }(r)-(2%
\protect\nu -1)I_{\protect\nu +1}(r)}{I_{\protect\nu }(r)+rI_{\protect\nu %
+1}(r)}$ for $\protect\nu \in \{-1.8,-1.5,-1.4,-1.2\}$ on $\left[ 0,0.5%
\right] $}}{}{Figure}{\special{language "Scientific Word";type
"GRAPHIC";maintain-aspect-ratio TRUE;display "USEDEF";valid_file "T";width
2.93in;height 1.8308in;depth 0in;original-width 5.0004in;original-height
3.1142in;cropleft "0";croptop "1";cropright "1";cropbottom "0";tempfilename
'OH5TDJ02.wmf';tempfile-properties "PR";}}
\end{equation*}

\begin{theorem}
\label{th4} \textbf{i.} If $\nu >-1,$ then the radius of uniform convexity
of the function $h_{\nu }$ is the smallest positive root of the equation 
\begin{equation*}
1+r^{\frac{1}{2}}\frac{2(\nu -1)J_{\nu +1}(r^{\frac{1}{2}})-r^{\frac{1}{2}%
}J_{\nu }(r^{\frac{1}{2}})}{2J_{\nu }(r^{\frac{1}{2}})-r^{\frac{1}{2}}J_{\nu
+1}(r^{\frac{1}{2}})}=0.
\end{equation*}%
Moreover, $r_{\alpha }^{c}(h_{\nu })<\beta _{\nu ,1}^{2}<j_{\nu ,1}^{2},$
where $\beta _{\nu ,1}$ is the first positive zero of the Dini function $%
z\mapsto (2-\nu )J_{\nu }(z)+zJ_{\nu }^{\prime }(z).$

\textbf{ii.} If $\nu \in (-2,-1),$ then the radius of uniform convexity of
the function $h_{\nu }$ is $r^{uc}(g_{\nu }),$ where $r^{uc}(g_{\nu })$ is
the unique root of the equation 
\begin{equation*}
1+r^{\frac{1}{2}}\frac{r^{\frac{1}{2}}I_{\nu }(r^{\frac{1}{2}})-2(\nu
-1)I_{\nu +1}(r^{\frac{1}{2}})}{2I_{\nu }(r^{\frac{1}{2}})+r^{\frac{1}{2}%
}I_{\nu +1}(r^{\frac{1}{2}})}=0,
\end{equation*}%
in the interval $(0,a).$
\end{theorem}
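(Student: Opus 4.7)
The plan is to follow the structure of Theorem~\ref{th3} closely, transferring the argument from $g_\nu$ to $h_\nu$ via the relation $h_\nu(z) = \sqrt{z}\,g_\nu(\sqrt{z})$. A computation parallel to \cite[Lemma~2.4]{Ba1} yields the Mittag--Leffler expansion
\[
\frac{zh_\nu''(z)}{h_\nu'(z)} = -\sum_{n\geq 1}\frac{z}{\beta_{\nu,n}^2-z},
\]
where $\beta_{\nu,n}$ is the $n$-th positive zero of the Dini function $z\mapsto(2-\nu)J_\nu(z)+zJ_\nu'(z)$. Note that the series is in $z$ rather than $z^2$, since $h_\nu$ itself is a series in $z$. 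In part \textbf{i} all zeros $\beta_{\nu,n}^2$ are positive; in part \textbf{ii}, by the Hurwitz-type argument used in Theorem~\ref{th3}.ii, exactly one zero becomes purely imaginary, giving $\beta_{\nu,1}^2=-a$ with $a>0$ and $\beta_{\nu,n}^2>0$ for $n\geq 2$.

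For part \textbf{i} ($\nu>-1$), I would apply inequality (\ref{4}) of Lemma~\ref{L1} to each summand with $b=\beta_{\nu,n}^2$, obtaining, for $|z|\leq r<\beta_{\nu,1}^2$, both
\[
\func{Re}\left(1+\tfrac{zh_\nu''(z)}{h_\nu'(z)}\right)\geq 1+\tfrac{rh_\nu''(r)}{h_\nu'(r)} \quad\text{and}\quad \left|\tfrac{zh_\nu''(z)}{h_\nu'(z)}\right|\leq -\tfrac{rh_\nu''(r)}{h_\nu'(r)},
\]
with equality at $z=r$. These combine to give $\func{Re}(1+zh_\nu''(z)/h_\nu'(z))-|zh_\nu''(z)/h_\nu'(z)|\geq 1+2rh_\nu''(r)/h_\nu'(r)$. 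The map $r\mapsto 1+2rh_\nu''(r)/h_\nu'(r)$ is strictly decreasing on $(0,\beta_{\nu,1}^2)$ from $1$ to $-\infty$, giving a unique root $r^{uc}(h_\nu)$. To recast this root as the stated equation, I would use the recurrence $zJ_\nu'(z)=\nu J_\nu(z)-zJ_{\nu+1}(z)$ to rewrite $(2-\nu)J_\nu(z)+zJ_\nu'(z)$ as $2J_\nu(z)-zJ_{\nu+1}(z)$, differentiate once more, and simplify via $2\nu J_\nu(z)=z[J_{\nu-1}(z)+J_{\nu+1}(z)]$.

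For part \textbf{ii} ($\nu\in(-2,-1)$), I would isolate the singular $n=1$ term to write
\[
1+\tfrac{zh_\nu''(z)}{h_\nu'(z)} = 1+\tfrac{z}{a+z}-\sum_{n\geq 2}\tfrac{z}{\beta_{\nu,n}^2-z},
\]
then, using the identity $\sum_{n\geq 1}1/\beta_{\nu,n}^2=1/(2(\nu+1))$ (obtained from the Taylor coefficient of $z$ in $zh_\nu''(z)/h_\nu'(z)$ at the origin), regroup it in parallel with (\ref{g1}) as a constant plus a term $C\,z/(a+z)$ with $C>0$ (since $\nu+1<0$), plus terms of the type $\frac{z^2}{(a+z)(\beta_{\nu,n}^2-z)}$ that are amenable to inequality (\ref{41}) of Lemma~\ref{L1}.ii. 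Mirroring the decomposition in (\ref{g2})--(\ref{g3}) then yields, for $|z|\leq r<a$,
\[
\func{Re}\left(1+\tfrac{zh_\nu''(z)}{h_\nu'(z)}\right)-\left|\tfrac{zh_\nu''(z)}{h_\nu'(z)}\right|\geq 1-\tfrac{2rh_\nu''(-r)}{h_\nu'(-r)},
\]
with equality at $z=-r$. Since $r\mapsto 1-2rh_\nu''(-r)/h_\nu'(-r)$ decreases strictly on $(0,a)$ from $1$ to $-\infty$, it has a unique root, namely $r^{uc}(h_\nu)$. Converting $h_\nu$-derivatives at $-r$ (equivalently $J_\nu$ evaluated at $ir^{1/2}$) into $I_\nu$-derivatives via $I_\nu(\sqrt{z})=(-1)^{-\nu/2}J_\nu(\sqrt{-z})$ then produces the stated equation in $I_\nu(r^{1/2})$ and $I_{\nu+1}(r^{1/2})$.

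The main obstacle will be the bookkeeping in part \textbf{ii}: verifying the series identity for $\sum 1/\beta_{\nu,n}^2$, checking that the regrouped coefficient in front of $z/(a+z)$ has the right (positive) sign on $(-2,-1)$ so that the maximum-principle reduction used in conjunction with Lemma~\ref{L1}.ii goes through, and carefully translating the real-variable statement into the modified Bessel form. Part \textbf{i} is then a direct analogue of Theorem~\ref{th3}.i.
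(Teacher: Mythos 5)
Your proposal is correct and follows essentially the same route as the paper: the Mittag--Leffler expansion of $zh_{\nu }''(z)/h_{\nu }'(z)$ over the squared Dini zeros $\beta _{\nu ,n}^{2}$, inequality (\ref{4}) termwise for $\nu >-1$, and for $\nu \in (-2,-1)$ the Hurwitz splitting of the single imaginary zero, the identity $\sum_{n\geq 1}\beta _{\nu ,n}^{-2}=1/(2(\nu +1))$, the regrouping with positive coefficient in front of $z/(b^{2}+z)$, and Lemma \ref{L1}.ii, followed by the monotonicity argument and the recurrence/modified-Bessel conversion. The only difference is that the paper imports the expansion and the real-part lower bounds ready-made from \cite{Ba1} and \cite{Ba3} rather than rederiving them.
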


\begin{proof}
\textbf{i. }In \cite[Lemma 2.5]{Ba1} the following equality has been proven 
\begin{equation}
\frac{zh_{\nu }^{\prime \prime }(z)}{h_{\nu }^{\prime }(z)}=\frac{\nu (\nu
-2)J_{\nu }(z^{\frac{1}{2}})+(3-2\nu )z^{\frac{1}{2}}J_{\nu }^{\prime }(z^{%
\frac{1}{2}})+zJ_{\nu }^{\prime \prime }(z^{\frac{1}{2}})}{2(2-\nu )J_{\nu
}(z^{\frac{1}{2}})+2z^{\frac{1}{2}}J_{\nu }^{\prime }(z^{\frac{1}{2}})}%
=-\sum_{n\geq 1}\frac{z}{\beta _{\nu ,n}^{2}-z},  \label{15}
\end{equation}%
and in the same paper, using the above equality, in Theorem 1.3 the
following inequality was deduced 
\begin{equation}
\func{Re}\left( 1+\frac{zh_{\nu }^{\prime \prime }(z)}{h_{\nu }^{\prime }(z)}%
\right) \geq 1+\frac{rh_{\nu }^{\prime \prime }(r)}{h_{\nu }^{\prime }(r)}=1+%
\frac{r^{\frac{1}{2}}}{2}\frac{r^{\frac{1}{2}}J_{\nu +2}(r^{\frac{1}{2}%
})-4J_{\nu +1}(r^{\frac{1}{2}})}{2J_{\nu }(r^{\frac{1}{2}})-r^{\frac{1}{2}%
}J_{\nu +1}(r^{\frac{1}{2}})},\ \ \ \ \   \label{16}
\end{equation}%
where $|z|<r<\beta _{\nu ,1}^{2}<j_{\nu ,1}^{2}$ and $\beta _{\nu ,n}$ is
the $n$ th positive zeros of the Dini function $z\mapsto (2-\nu )J_{\nu
}(z)+zJ_{\nu }^{\prime }(z).$ The equality (\ref{15}) and the second
inequality of (\ref{4}) imply%
\begin{eqnarray}
\left\vert \frac{zh_{\nu }^{\prime \prime }(z)}{h_{\nu }^{\prime }(z)}%
\right\vert &=&\left\vert \sum_{n\geq 1}\frac{z}{\beta _{\nu ,n}^{2}-z}%
\right\vert \leq \sum_{n\geq 1}\left\vert \frac{z}{\beta _{\nu ,n}^{2}-z}%
\right\vert \leq \sum_{n\geq 1}\frac{r}{\beta _{\nu ,n}^{2}-r}  \label{17} \\
&=&-\frac{rh_{\nu }^{\prime \prime }(r)}{h_{\nu }^{\prime }(r)},\ \
|z|<r<\beta _{\nu ,1}^{2}.  \notag
\end{eqnarray}%
From the inequalities (\ref{16}) and (\ref{17}) we infer 
\begin{equation}
\func{Re}\left( 1+\frac{zh_{\nu }^{\prime \prime }(z)}{h_{\nu }^{\prime }(z)}%
\right) -\left\vert \frac{zh_{\nu }^{\prime \prime }(z)}{h_{\nu }^{\prime
}(z)}\right\vert \geq 1+2r\frac{h_{\nu }^{\prime \prime }(r)}{h_{\nu
}^{\prime }(r)}\ \ |z|<r<\beta _{\nu ,1}^{2}.  \label{171}
\end{equation}%
The equality holds if and only if $z=r.$ Thus we get 
\begin{eqnarray}
\inf_{|z|<r}\left[ \func{Re}\left( 1+\frac{zh_{\nu }^{\prime \prime }(z)}{%
h_{\nu }^{\prime }(z)}\right) -\left\vert \frac{zh_{\nu }^{\prime \prime }(z)%
}{h_{\nu }^{\prime }(z)}\right\vert \right] &=&1+2r\frac{h_{\nu }^{\prime
\prime }(r)}{h_{\nu }^{\prime }(r)}  \notag \\
&=&1+r^{\frac{1}{2}}\frac{r^{\frac{1}{2}}J_{\nu +2}(r^{\frac{1}{2}})-4J_{\nu
+1}(r^{\frac{1}{2}})}{2J_{\nu }(r^{\frac{1}{2}})-r^{\frac{1}{2}}J_{\nu
+1}(r^{\frac{1}{2}})},  \notag
\end{eqnarray}%
for every $r\in (0,\beta _{\nu ,1}^{2}).$ Since the mapping $\phi _{\nu
}:(0,\beta _{\nu ,1}^{2})\rightarrow \mathbb{R}$ defined by $\phi _{\nu
}(r)=1+2r\frac{h_{\nu }^{\prime \prime }(r)}{h_{\nu }^{\prime }(r)}%
=1-\sum_{n\geq 1}\frac{2r}{\beta _{\nu ,n}^{2}-r}$ is strictly decreasing,
and $\lim_{r\searrow 0}\phi _{\nu }(r)=1$ and $\lim_{r\nearrow {\alpha _{\nu
,1}}}\phi _{\nu }(r)=-\infty ,$ it follows that the equation $1+r^{\frac{1}{2%
}}\frac{r^{\frac{1}{2}}J_{\nu +2}(r^{\frac{1}{2}})-4J_{\nu +1}(r^{\frac{1}{2}%
})}{2J_{\nu }(r^{\frac{1}{2}})-r^{\frac{1}{2}}J_{\nu +1}(r^{\frac{1}{2}})}%
=1+r^{\frac{1}{2}}\frac{2(\nu -1)J_{\nu +1}(r^{\frac{1}{2}})-r^{\frac{1}{2}%
}J_{\nu }(r^{\frac{1}{2}})}{2J_{\nu }(r^{\frac{1}{2}})-r^{\frac{1}{2}}J_{\nu
+1}(r^{\frac{1}{2}})}=0$ has a unique root $r_{0}\in (0,\beta _{\nu ,1}^{2})$%
, and this root is the radius of uniform convexity. In the last equality we
use the recurrence relation $2\nu J_{\nu }(z)=z\left[ J_{\nu -1}(z)+J_{\nu
+1}(z)\right] $.

\textbf{ii. }By using the result of Hurwitz \cite[p. 305]{Wat} on zeros of
Bessel functions of the first kind, the condition $\nu \in (-2,-1)$ implies $%
\beta _{\nu ,1}=ib,\;b>0$ and $0<\beta _{\nu ,2}<\beta _{\nu ,3}<...\beta
_{\nu ,n}<...$ for $n\in \{2,3,...\}.$ Thus, from equality (\ref{15}), we
have%
\begin{eqnarray*}
1+\frac{zh_{\nu }^{\prime \prime }(z)}{h_{\nu }^{\prime }(z)} &=&1+\frac{z}{%
b^{2}+z}-\sum_{n\geq 2}\frac{z}{\beta _{\nu ,n}^{2}-z} \\
&=&1-\frac{b^{2}}{2(1+\nu )}\frac{z}{b^{2}+z}-\sum_{n\geq 2}\frac{%
b^{2}+\beta _{\nu ,n}^{2}}{\beta _{\nu ,n}^{2}}\frac{z^{2}}{(b^{2}+z)(\beta
_{\nu ,n}^{2}-z)}.
\end{eqnarray*}%
Here, we used the following equality (see \cite[p. 305]{Ba3})%
\begin{equation*}
\sum_{n=1}^{\infty }\frac{1}{\beta _{\nu ,n}^{2}}=\frac{1}{2(\nu +1)}\text{
and so }\frac{1}{b^{2}}=\frac{1}{2(\nu +1)}-\sum_{n=2}^{\infty }\frac{1}{%
\beta _{\nu ,n}^{2}}.\text{ }
\end{equation*}

In \cite[p. 305]{Ba3} the following equality has been proven%
\begin{eqnarray*}
\func{Re}\left( 1+\frac{zh_{\nu }^{\prime \prime }(z)}{h_{\nu }^{\prime }(z)}%
\right) &\geq &1+\frac{b^{2}}{2(1+\nu )}\frac{r}{b^{2}-r}-\sum_{n\geq 2}%
\frac{b^{2}+\beta _{\nu ,n}^{2}}{\beta _{\nu ,n}^{2}}\frac{r^{2}}{%
(b^{2}-r)(\beta _{\nu ,n}^{2}+r)} \\
&=&1+\frac{-rh_{\nu }^{\prime \prime }(-r)}{h_{\nu }^{\prime }(-r)}>0,\
|z|\leq r<b^{2}.
\end{eqnarray*}%
On the other hand, from inequality (\ref{41} ) we get%
\begin{eqnarray*}
\left\vert \frac{zh_{\nu }^{\prime \prime }(z)}{h_{\nu }^{\prime }(z)}%
\right\vert &=&\left\vert -\frac{b^{2}}{2(1+\nu )}\frac{z}{b^{2}+z}%
-\sum_{n\geq 2}\frac{b^{2}+\beta _{\nu ,n}^{2}}{\beta _{\nu ,n}^{2}}\frac{%
z^{2}}{(b^{2}+z)(\beta _{\nu ,n}^{2}-z)}\right\vert \\
&\leq &-\frac{b^{2}}{2(1+\nu )}\left\vert \frac{z}{b^{2}+z}\right\vert
+\sum_{n\geq 2}\frac{b^{2}+\beta _{\nu ,n}^{2}}{\beta _{\nu ,n}^{2}}%
\left\vert \frac{z^{2}}{(b^{2}+z)(\beta _{\nu ,n}^{2}-z)}\right\vert \\
&\leq &-\frac{b^{2}}{2(1+\nu )}\frac{r}{b^{2}-r}+\sum_{n\geq 2}\frac{%
b^{2}+\beta _{\nu ,n}^{2}}{\beta _{\nu ,n}^{2}}\frac{r^{2}}{(b^{2}-r)(\beta
_{\nu ,n}^{2}+r)} \\
&=&-\frac{-rh_{\nu }^{\prime \prime }(-r)}{h_{\nu }^{\prime }(-r)},\ |z|\leq 
{r}<b^{2}.\ \ \ 
\end{eqnarray*}

Consequently the following inequality holds 
\begin{equation*}
\inf_{|z|<r}\left[ \func{Re}\left( 1+\frac{zh_{\nu }^{\prime \prime }(z)}{%
h_{\nu }^{\prime }(z)}\right) -\left\vert \frac{zh_{\nu }^{\prime \prime }(z)%
}{h_{\nu }^{\prime }(z)}\right\vert \right] =1+2\frac{-rh_{\nu }^{\prime
\prime }(-r)}{h_{\nu }^{\prime }(-r)}
\end{equation*}%
for every $r\in (0,b^{2}).$ Since the mapping 
\begin{equation*}
\Phi _{\nu }:(0,b^{2})\rightarrow \mathbb{R},\ \ \Phi _{\nu }(r)=1+2\frac{%
-rh_{\nu }^{\prime \prime }(-r)}{h_{\nu }^{\prime }(-r)}=1-2\frac{r}{b^{2}-r}%
+2\sum_{n\geq 2}\frac{r}{\beta _{\nu ,n}^{2}+r}
\end{equation*}%
is strictly decreasing, $\lim_{r\searrow 0}\Phi _{\nu }=1$ and $%
\lim_{r\nearrow {b}^{2}}\Phi _{\nu }=-\infty .$ Thus it follows that the
equation $1+2\frac{-rh_{\nu }^{\prime \prime }(-r)}{h_{\nu }^{\prime }(-r)}%
=1+r^{\frac{1}{2}}\frac{r^{\frac{1}{2}}I_{\nu }(r^{\frac{1}{2}})-2(\nu
-1)I_{\nu +1}(r^{\frac{1}{2}})}{2I_{\nu }(r^{\frac{1}{2}})+r^{\frac{1}{2}%
}I_{\nu +1}(r^{\frac{1}{2}})}=0$ has a unique root $r^{uc}(h_{\nu })\in
(0,b^{2}).$
\end{proof}

\begin{equation*}
\FRAME{itbpFU}{2.9196in}{1.8248in}{0in}{\Qcb{The graph of the function $%
\protect\nu \mapsto 1+r^{\frac{1}{2}}\frac{2(\protect\nu -1)J_{\protect\nu %
+1}(r^{\frac{1}{2}})-r^{\frac{1}{2}}J_{\protect\nu }(r^{\frac{1}{2}})}{2J_{%
\protect\nu }(r^{\frac{1}{2}})-r^{\frac{1}{2}}J_{\protect\nu +1}(r^{\frac{1}{%
2}})}$ for $\protect\nu \in \{-0.5,0,0.5,1.5\}$ on $\left[ 0,1\right] $}}{}{%
Figure}{\special{language "Scientific Word";type
"GRAPHIC";maintain-aspect-ratio TRUE;display "USEDEF";valid_file "T";width
2.9196in;height 1.8248in;depth 0in;original-width 5.0004in;original-height
3.1142in;cropleft "0";croptop "1";cropright "1";cropbottom "0";tempfilename
'OH5TDJ03.wmf';tempfile-properties "PR";}}\text{ \ \FRAME{itbpFU}{2.9404in}{%
1.8377in}{0in}{\Qcb{The graph of the function $\protect\nu \mapsto 1+r^{%
\frac{1}{2}}\frac{r^{\frac{1}{2}}I_{\protect\nu }(r^{\frac{1}{2}})-2(\protect%
\nu -1)I_{\protect\nu +1}(r^{\frac{1}{2}})}{2I_{\protect\nu }(r^{\frac{1}{2}%
})+r^{\frac{1}{2}}I_{\protect\nu +1}(r^{\frac{1}{2}})}$ for $\protect\nu \in
\{-1.8,-1.5,-1.4,-1.2\}$ on $\left[ 0,0.35\right] $}}{}{Figure}{\special%
{language "Scientific Word";type "GRAPHIC";maintain-aspect-ratio
TRUE;display "USEDEF";valid_file "T";width 2.9404in;height 1.8377in;depth
0in;original-width 5.0004in;original-height 3.1142in;cropleft "0";croptop
"1";cropright "1";cropbottom "0";tempfilename
'OH5TDJ04.wmf';tempfile-properties "PR";}}\ }
\end{equation*}

\subsection{\textbf{Uniform convexity of normalized Bessel functions}}

Now, let us recall some results on the geometric behavior of the functions $%
f_{\nu }(z),$ $g_{\nu }(z)$ and $h_{\nu }(z).$ In 2010 Szász \cite{Sza0}
investigated the starlikeness of $h_{\nu }(z)$ in case of $\nu \geq \nu
_{\ast }\simeq -0.5623...,$ where $\nu _{\ast }$ is the unique root of the
equation $f_{\nu }^{\prime }(1)=0$ in $(-1,1).$ Baricz and Szász \cite{Ba1}
proved that $f_{\nu }(z)$ and $g_{\nu }(z)$ are convex in $U$ $%
\Leftrightarrow $ $\nu \geq 1,$ and $h_{\nu }(z)$ is convex in $U$ $%
\Leftrightarrow $ $\nu \geq \nu _{\ast \ast }\simeq -0.1438...,$ where $\nu
_{\ast \ast }$ is the unique root of the equation $(2\nu -4)J_{\nu
+1}(1)+3J_{\nu }(1)=0.$ Furthermore, Baricz and Szász \cite{Ba4}, Baricz et
al. \cite{Ba5} and Baricz et al. \cite{Ba6} obtained necessary and
sufficient conditions for the starlikeness and close-to-convexity of the
function $h_{\nu }(z)$ and its derivatives, some special combinations of
Bessel functions and their derivatives, and the functions $f_{\nu }(z),$ $%
g_{\nu }(z)$ and derivatives of $h_{\nu }(z)$ in $U,$ respectively, by using
a result of Shah and Trimble (see \cite[Theorem 2]{Sh}) about transcendental
entire functions with univalent derivatives. In this section, we deal with
the uniform convexity of the normalized Bessel functions $f_{\nu }(z),$ $%
g_{\nu }(z)$ and $h_{\nu }(z)$ in $U.$

\begin{theorem}
\label{th5} The function $f_{\nu }$ is uniformly convex in $U$ if and only
if $\nu >\nu _{1}\simeq 1.4426...,$ where $\nu _{1}$ is the unique root of
the equation 
\begin{equation*}
\nu (3\nu -2)\left( J_{\nu }(1)\right) ^{2}+\nu (4\nu -5)J_{\nu }(1)J_{\nu
-1}(1)+2(1-\nu )\left( J_{\nu -1}(1)\right) ^{2}=0
\end{equation*}%
situated in $(\nu ^{\ast },\infty ),$ where $\nu ^{\ast }\simeq 0.39001...$
is the root of the equation $J_{\nu }^{\prime }(1)=0.$
\end{theorem}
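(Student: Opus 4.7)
The plan is to reduce uniform convexity of $f_\nu$ on $U$ to a condition at $r=1$ via Theorem~\ref{th2}. From the proof of that theorem, the function
\[
\psi_\nu(r):=1+2\frac{rf_\nu''(r)}{f_\nu'(r)}
\]
is strictly decreasing on $(0,j'_{\nu,1})$, with $\psi_\nu(0^+)=1$ and $\psi_\nu(j'_{\nu,1}-0)=-\infty$, and $r^{uc}(f_\nu)$ is its unique zero. Hence $f_\nu$ is uniformly convex in $U$ if and only if $\psi_\nu(1)\ge 0$. This first of all requires $j'_{\nu,1}>1$; since $\nu\mapsto j'_{\nu,1}$ is strictly increasing, this is equivalent to $\nu>\nu^{\ast}$ by definition of $\nu^{\ast}$.

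\textbf{Reformulation as the stated polynomial condition.} Starting from
\[
\psi_\nu(1)=1+2\frac{J_\nu''(1)}{J_\nu'(1)}+2\left(\frac{1}{\nu}-1\right)\frac{J_\nu'(1)}{J_\nu(1)},
\]
I would use the Bessel equation at $r=1$ to replace $J_\nu''(1)$ by $-J_\nu'(1)-(1-\nu^2)J_\nu(1)$, which gives
\[
\psi_\nu(1)=-1+2(\nu^2-1)\frac{J_\nu(1)}{J_\nu'(1)}+2\left(\frac{1}{\nu}-1\right)\frac{J_\nu'(1)}{J_\nu(1)}.
\]
Multiplying through by $\nu J_\nu(1)J_\nu'(1)$ and then invoking the recurrence $J_\nu'(1)=J_{\nu-1}(1)-\nu J_\nu(1)$ to eliminate $J_\nu'(1)$, a routine expansion should collect the result into
\[
\nu(3\nu-2)J_\nu(1)^{2}+\nu(4\nu-5)J_\nu(1)J_{\nu-1}(1)+2(1-\nu)J_{\nu-1}(1)^{2},
\]
so $\psi_\nu(1)\ge 0$ is equivalent to the non-negativity of this polynomial, and the equality case is the displayed equation defining $\nu_1$.

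\textbf{Existence and uniqueness of $\nu_1$.} Setting $\Psi(\nu):=\psi_\nu(1)$, the series form of $\psi_\nu$ from Theorem~\ref{th2} gives
\[
\Psi(\nu)=1-\sum_{n\ge 1}\frac{4}{j'^{\,2}_{\nu,n}-1}+\left(1-\frac{1}{\nu}\right)\sum_{n\ge 1}\frac{4}{j^{\,2}_{\nu,n}-1}.
\]
As $\nu\searrow\nu^{\ast}$ one has $j'_{\nu,1}\searrow 1^+$, so the first sum diverges and $\Psi(\nu)\to-\infty$; as $\nu\to\infty$ all zeros $j_{\nu,n},j'_{\nu,n}\to\infty$, so $\Psi(\nu)\to 1$. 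The intermediate value theorem then gives a root in $(\nu^{\ast},\infty)$.

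\textbf{Main obstacle.} The delicate point is to show $\Psi$ is strictly monotonic on $(\nu^{\ast},\infty)$, which is what is needed to pin down a \emph{unique} root. The classical monotonicity of Bessel zeros, $\partial j_{\nu,n}/\partial\nu>0$ and $\partial j'_{\nu,n}/\partial\nu>0$, drives the first sum down and makes its contribution to $\Psi$ increase in $\nu$; but in the second term the prefactor $1-1/\nu$ is itself increasing while the sum is decreasing, so two competing monotonicities collide. I plan to overcome this by differentiating $\Psi$ termwise and controlling the competition through the inequalities $j_{\nu,n}>j'_{\nu,n}>\nu$ and standard bounds on $\partial j_{\nu,n}/\partial\nu$, which should give $\Psi'(\nu)>0$ on $(\nu^{\ast},\infty)$; an explicit numerical check then locates the unique root at $\nu_1\simeq 1.4426\ldots$.
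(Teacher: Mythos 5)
Your overall route coincides with the paper's: both reduce the problem to the sign of $\psi_\nu(1)=1+2f_\nu''(1)/f_\nu'(1)$ via the sharp lower bound (\ref{111}) and the strict decrease of $r\mapsto\psi_\nu(r)$, and both convert $\psi_\nu(1)=0$ into the stated quadratic in $J_\nu(1),J_{\nu-1}(1)$ using the Bessel differential equation and the recurrence $zJ_\nu'(z)=zJ_{\nu-1}(z)-\nu J_\nu(z)$ (your algebra here checks out, and the multiplication by $\nu J_\nu(1)J_\nu'(1)$ is sign-preserving precisely because $J_\nu(1)>0$ and $J_\nu'(1)>0$ for $\nu>\nu^\ast$, a point worth stating explicitly).

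The genuine gap is the step you yourself flag as the ``main obstacle'': the strict monotonicity of $\Psi(\nu)=\psi_\nu(1)$ on $(\nu^\ast,\infty)$, which is exactly what makes the root $\nu_1$ unique and what turns ``there exists a root'' into the stated if-and-only-if characterization. You do not prove it; you only announce a plan (``differentiating termwise \ldots should give $\Psi'(\nu)>0$''). The competition you identify is real and is concentrated in the range $\nu>1$, where the increasing prefactor $1-1/\nu$ multiplies the decreasing sum $\sum_n 4/(j_{\nu,n}^2-1)$, and it is not clear that ``standard bounds on $\partial j_{\nu,n}/\partial\nu$'' close it: one needs a quantitative comparison between the growth of $1/\nu^2$ times the sum and the decay rate of the sum itself, which requires more than the bare inequalities $j_{\nu,n}>j_{\nu,n}'>\nu$. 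The paper avoids this entirely by quoting the result of Baricz and Sz\'asz \cite{Ba1} that $\nu\mapsto 1+f_\nu''(1)/f_\nu'(1)$ is strictly increasing on $(\nu^\ast,\infty)$, of which $\Psi$ is an increasing affine image; if you do not want to cite that, you must actually supply the monotonicity argument. Two minor further points: your criterion ``uniformly convex in $U$ iff $\psi_\nu(1)\ge 0$'' needs the sharpness of (\ref{111}) (equality exactly at $z=r$) for the ``only if'' direction, which you use implicitly; and your limit $\Psi(\nu)\to 1$ as $\nu\to\infty$ requires justifying the interchange of limit and infinite sum (e.g.\ via the Rayleigh-type identities for $\sum_n j_{\nu,n}^{-2}$ and $\sum_n j_{\nu,n}'^{-2}$), though once monotonicity is in hand the existence of the root can instead be settled by evaluating $\Psi$ at two explicit values of $\nu$.
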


\begin{proof}
According to (\ref{111}) for $z\in U$ we know that%
\begin{eqnarray*}
\func{Re}\left( 1+\frac{zf_{\nu }^{\prime \prime }(z)}{f_{\nu }^{\prime }(z)}%
\right) -\left\vert \frac{zf_{\nu }^{\prime \prime }(z)}{f_{\nu }^{\prime
}(z)}\right\vert &\geq &1-2\sum_{n\geq 1}\left( \frac{2r^{2}}{j_{\nu
,n}^{\prime 2}-r^{2}}-\left( 1-\frac{1}{\nu }\right) \frac{2r^{2}}{j_{\nu
,n}^{2}-r^{2}}\right) \\
&=&1+2\frac{rJ_{\nu }^{\prime \prime }(r)}{J_{\nu }^{\prime }(r)}+2\left( 
\frac{1}{\nu }-1\right) \frac{rJ_{\nu }^{\prime }(r)}{J_{\nu }(r)}=1+2\frac{%
rf_{\nu }^{\prime \prime }(r)}{f_{\nu }^{\prime }(r)}.
\end{eqnarray*}%
Since the mapping $\psi _{\nu }:(0,j_{\nu ,1}^{\prime })\rightarrow \mathbb{R%
}$ defined by $\psi _{\nu }(r)=1+2\frac{rf_{\nu }^{\prime \prime }(r)}{%
f_{\nu }^{\prime }(r)}$ is strictly decreasing and the inequalities $%
1<j_{\nu ,n}^{\prime }<j_{\nu ,n}$ hold for $n\in \{1,2,...\}$, it follows
that 
\begin{eqnarray*}
\func{Re}\left( 1+\frac{zf_{\nu }^{\prime \prime }(z)}{f_{\nu }^{\prime }(z)}%
\right) -\left\vert \frac{zf_{\nu }^{\prime \prime }(z)}{f_{\nu }^{\prime
}(z)}\right\vert &\geq &1-2\sum_{n\geq 1}\left( \frac{2r^{2}}{j_{\nu
,n}^{\prime 2}-r^{2}}-\left( 1-\frac{1}{\nu }\right) \frac{2r^{2}}{j_{\nu
,n}^{2}-r^{2}}\right) \\
&\geq &1-2\sum_{n\geq 1}\left( \frac{2}{j_{\nu ,n}^{\prime 2}-1}-\left( 1-%
\frac{1}{\nu }\right) \frac{2}{j_{\nu ,n}^{2}-1}\right) \\
&=&1+2\frac{J_{\nu }^{\prime \prime }(1)}{J_{\nu }^{\prime }(1)}+2\left( 
\frac{1}{\nu }-1\right) \frac{J_{\nu }^{\prime }(1)}{J_{\nu }(1)}=1+2\frac{%
f_{\nu }^{\prime \prime }(1)}{f_{\nu }^{\prime }(1)}.
\end{eqnarray*}%
Now, consider the function $\psi :(\nu ^{\ast },\infty )\rightarrow 
\mathbb{R}
,$ defined by 
\begin{equation*}
\psi (\nu )=1+2\frac{f_{\nu }^{\prime \prime }(1)}{f_{\nu }^{\prime }(1)}=1+2%
\frac{J_{\nu }^{\prime \prime }(1)}{J_{\nu }^{\prime }(1)}+2\left( \frac{1}{%
\nu }-1\right) \frac{J_{\nu }^{\prime }(1)}{J_{\nu }(1)}.
\end{equation*}%
We know that $\nu \mapsto 1+$ $\frac{f_{\nu }^{\prime \prime }(1)}{f_{\nu
}^{\prime }(1)}=1+\frac{J_{\nu }^{\prime \prime }(1)}{J_{\nu }^{\prime }(1)}%
+\left( \frac{1}{\nu }-1\right) \frac{J_{\nu }^{\prime }(1)}{J_{\nu }(1)}$
is stricly increasing on $(\nu ^{\ast },\infty )$ (see \cite{Ba1}), thus the
function $\psi $ is stricly increasing on $(\nu ^{\ast },\infty )$ too$.$
Consequently, if $\nu >\nu _{1},$ then we get the inequality $\psi (\nu
)\geq \psi (\nu _{1}).$ This in turn implies that $\nu _{1}$ is the smallest
value having the property that the condition $\nu \geq \nu _{1}$ implies
that for all $z\in U$ we have 
\begin{equation*}
\func{Re}\left( 1+\frac{zf_{\nu }^{\prime \prime }(z)}{f_{\nu }^{\prime }(z)}%
\right) -\left\vert \frac{zf_{\nu }^{\prime \prime }(z)}{f_{\nu }^{\prime
}(z)}\right\vert >\psi (\nu _{1})=0.
\end{equation*}%
Thus, we proved that the function $f_{\nu }$ is uniformly convex in $U$ if
and only if $\nu >\nu _{1},$ where $\nu _{1}$ is the unique root of the
equation 
\begin{equation*}
1+2\frac{J_{\nu }^{\prime \prime }(1)}{J_{\nu }^{\prime }(1)}+2\left( \frac{1%
}{\nu }-1\right) \frac{J_{\nu }^{\prime }(1)}{J_{\nu }(1)}=0.
\end{equation*}%
The function $J_{\nu }(z)$ satisfies the Bessel differential equation $%
z^{2}w^{\prime \prime }(z)+zw^{\prime }(z)+\left( 1-v^{2}\right) w(z)=0$ and
by using the recurrence relation $zJ_{\nu }^{\prime }(z)=zJ_{\nu -1}(z)-\nu
J_{\nu }(z)$ the above equation can be rewritten as follows 
\begin{equation*}
\nu (3\nu -2)\left( J_{\nu }(1)\right) ^{2}+\nu (4\nu -5)J_{\nu }(1)J_{\nu
-1}(1)+2(1-\nu )\left( J_{\nu -1}(1)\right) ^{2}=0.
\end{equation*}%
In last equality, we used $J_{\nu }(1)>0$ and $J_{\nu }^{\prime }(1)>0$ when 
$\nu >\nu ^{\ast }$ (see \cite{Ba1}).
\end{proof}

\begin{theorem}
\label{th6} The function $g_{\nu }$ is uniformly convex in $U$ if and only
if $\nu >\nu _{2}\simeq 2.44314...,$ where $\nu _{2}$ is the unique root of
the equation 
\begin{equation*}
(4\nu -3)J_{\nu +1}(1)-J_{\nu }(1)=0
\end{equation*}%
situated in $[0,\infty ).$
\end{theorem}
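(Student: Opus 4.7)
The plan is to mirror the argument of Theorem~\ref{th5}, with the inequality \eqref{141} from Theorem~\ref{th3}(i) playing the role that \eqref{111} played for $f_\nu$. The main new ingredient is that \eqref{141} is valid on the entire disk $|z|<\alpha_{\nu,1}$, so once $\alpha_{\nu,1}>1$ (which will be automatic for the relevant range of $\nu$) one may apply it on all of $U$ and push the analysis to the boundary point $r=1$.

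First, for any such $\nu$ I would fix $z\in U$, pick $r$ with $|z|\leq r<1$, apply \eqref{141}, and let $r\nearrow 1$. Using the strict monotonicity of $r\mapsto 1+2rg_\nu''(r)/g_\nu'(r)$ (established in the proof of Theorem~\ref{th3}) together with the closed-form expression given there, this produces
\begin{equation*}
\func{Re}\left(1+\frac{zg_\nu''(z)}{g_\nu'(z)}\right)-\left|\frac{zg_\nu''(z)}{g_\nu'(z)}\right|\geq 1+2\frac{g_\nu''(1)}{g_\nu'(1)}=:\Psi(\nu),
\end{equation*}
and a short algebraic simplification yields
\begin{equation*}
\Psi(\nu)=\frac{(4\nu-3)J_{\nu+1}(1)-J_\nu(1)}{J_\nu(1)-J_{\nu+1}(1)}.
\end{equation*}

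Second, I would study $\Psi$ as a function of $\nu$. The Mittag-Leffler expansion \eqref{12} evaluated at $z^{2}=1$ gives $\Psi(\nu)=1-4\sum_{n\geq 1}(\alpha_{\nu,n}^{2}-1)^{-1}$, and its strict monotonicity in $\nu$ reduces to the fact that the positive zeros $\alpha_{\nu,n}$ of the Dini function $z\mapsto(1-\nu)J_\nu(z)+zJ_\nu'(z)$ are strictly increasing in $\nu$, a property already invoked in \cite{Ba1}. Since $\Psi$ sweeps continuously from a negative value up to the limit $1$ as $\nu$ increases through $[0,\infty)$, there is a unique $\nu_2$ with $\Psi(\nu_2)=0$; equivalently, since the denominator $J_\nu(1)-J_{\nu+1}(1)$ does not vanish at $\nu_2$, the unique root of $(4\nu-3)J_{\nu+1}(1)-J_\nu(1)=0$ in $[0,\infty)$.

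Finally I would conclude via Theorem~\ref{T1}: for $\nu>\nu_2$ the lower bound $\Psi(\nu)$ is strictly positive, so $g_\nu$ is uniformly convex in $U$; for $\nu<\nu_2$, taking $z=r\in(0,1)$ real and letting $r\nearrow 1$ makes the left-hand side of \eqref{d1} tend to $\Psi(\nu)<0$, contradicting uniform convexity in $U$. The main obstacle I anticipate is the rigorous justification that $\Psi$ is strictly increasing in $\nu$; this depends on the monotonicity in $\nu$ of the Dini-zeros $\alpha_{\nu,n}$, which I would either cite from \cite{Ba1} or re-derive by implicit differentiation using the Bessel differential equation together with the defining equation $(1-\nu)J_\nu(\alpha_{\nu,n})+\alpha_{\nu,n}J_\nu'(\alpha_{\nu,n})=0$.
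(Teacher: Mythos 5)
Your proposal is correct and follows essentially the same route as the paper: apply \eqref{141} together with the monotonicity of $r\mapsto 1+2rg_{\nu}''(r)/g_{\nu}'(r)$ and $\alpha_{\nu,1}>1$ to reduce to the sign of $1+2g_{\nu}''(1)/g_{\nu}'(1)$, then use the strict monotonicity in $\nu$ of this quantity (the paper cites \cite{Ba1} for it, where you propose to re-derive it from the monotonicity of the Dini zeros) to locate the unique root $\nu_{2}$. Your explicit treatment of the ``only if'' direction via $z=r\nearrow 1$ is a small welcome addition, since the paper leaves that half implicit.
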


\begin{proof}
Taking into account the the inequality (\ref{141}) for $z\in U$ we know that%
\begin{eqnarray*}
\func{Re}\left( 1+\frac{zg_{\nu }^{\prime \prime }(z)}{g_{\nu }^{\prime }(z)}%
\right) -\left\vert \frac{zg_{\nu }^{\prime \prime }(z)}{g_{\nu }^{\prime
}(z)}\right\vert &\geq &1-4\sum_{n\geq 1}\frac{r^{2}}{\alpha _{\nu
,n}^{2}-r^{2}} \\
&=&1+2\frac{rg_{\nu }^{\prime \prime }(r)}{g_{\nu }^{\prime }(r)},\ |z|\leq {%
r}<\alpha _{\nu ,1}.
\end{eqnarray*}%
Since the mapping $\varphi _{\nu }:[0,\alpha _{\nu ,1})\rightarrow \mathbb{R}
$ defined by $\varphi _{\nu }(r)=1+2\frac{rg_{\nu }^{\prime \prime }(r)}{%
g_{\nu }^{\prime }(r)}$ is strictly decreasing and the inequalities $%
1<\alpha _{\nu ,1}$ hold for $\nu \geq 0$ (see \cite[Lemma 2.4]{Ba1}), we
get that%
\begin{eqnarray*}
\func{Re}\left( 1+\frac{zg_{\nu }^{\prime \prime }(z)}{g_{\nu }^{\prime }(z)}%
\right) -\left\vert \frac{zg_{\nu }^{\prime \prime }(z)}{g_{\nu }^{\prime
}(z)}\right\vert &\geq &1-4\sum_{n\geq 1}\frac{r^{2}}{\alpha _{\nu
,n}^{2}-r^{2}} \\
&\geq &1-4\sum_{n\geq 1}\frac{1}{\alpha _{\nu ,n}^{2}-1}=1+2\frac{g_{\nu
}^{\prime \prime }(1)}{g_{\nu }^{\prime }(1)} \\
&=&1+2\frac{(2\nu -1)J_{\nu +1}(1)-J_{\nu }(1)}{J_{\nu }(1)-J_{\nu +1}(1)}.
\end{eqnarray*}%
Now, consider the function $\varphi :[0,\infty )\rightarrow 
\mathbb{R}
,$ defined by 
\begin{equation*}
\varphi (\nu )=1+2\frac{(2\nu -1)J_{\nu +1}(1)-J_{\nu }(1)}{J_{\nu
}(1)-J_{\nu +1}(1)}.
\end{equation*}%
We know that $\nu \mapsto 1+$ $\frac{g_{\nu }^{\prime \prime }(1)}{g_{\nu
}^{\prime }(1)}=1+\frac{(2\nu -1)J_{\nu +1}(1)-J_{\nu }(1)}{J_{\nu
}(1)-J_{\nu +1}(1)}$ is stricly increasing and $J_{\nu }(1)-J_{\nu +1}(1)>0$
on $[0,\infty )$ (see \cite{Ba1}), thus the function $\varphi $ is stricly
increasing on $[0,\infty )$ too$.$ Since $\varphi $ is stricly increasing,
it follows that if $\nu >\nu _{2}\simeq 2.44314...,$ then we get the
inequality%
\begin{equation*}
1+2\frac{(2\nu -1)J_{\nu +1}(1)-J_{\nu }(1)}{J_{\nu }(1)-J_{\nu +1}(1)}%
=\varphi (\nu )\geq \varphi (\nu _{2})=1+2\frac{(2\nu _{2}-1)J_{\nu
_{2}+1}(1)-J_{\nu _{2}}(1)}{J_{\nu _{2}}(1)-J_{\nu _{2}+1}(1)}=0.
\end{equation*}

Thus, under the conditon $\nu >\nu _{2}\simeq 2.44314...,$ we have for $z\in
U$ 
\begin{equation*}
\func{Re}\left( 1+\frac{zg_{\nu }^{\prime \prime }(z)}{g_{\nu }^{\prime }(z)}%
\right) -\left\vert \frac{zg_{\nu }^{\prime \prime }(z)}{g_{\nu }^{\prime
}(z)}\right\vert >\varphi (\nu _{2})=0.
\end{equation*}%
Consequently, we proved that the function $g_{\nu }$ is uniformly convex in $%
U$ if and only if $\nu >\nu _{2}\simeq 2.44314...,$ where $\nu _{2}$ is the
unique root of the equation 
\begin{equation*}
1+2\frac{(2\nu -1)J_{\nu +1}(1)-J_{\nu }(1)}{J_{\nu }(1)-J_{\nu +1}(1)}=0.
\end{equation*}%
In \cite{Ba1}, the authors proved that $J_{\nu }(1)-J_{\nu +1}(1)>0$ when $%
\nu \geq 0$. Thus the proof is completed.
\end{proof}

\begin{theorem}
\label{th7} The function $h_{\nu }$ is uniformly convex in $U$ if and only
if $\nu >\nu _{3}\simeq 0.30608...,$ where $\nu _{3}$ is the unique root of
the equation 
\begin{equation*}
(2\nu -3)J_{\nu +1}(1)+J_{\nu }(1)=0
\end{equation*}%
situated in $[0,\infty ).$
\end{theorem}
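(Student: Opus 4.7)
The plan is to imitate the proof structure of Theorem \ref{th5} and Theorem \ref{th6}. The starting point is the inequality (\ref{171}) established during the proof of Theorem \ref{th4}\textbf{i}: for $|z|<r<\beta_{\nu,1}^{2}$,
\begin{equation*}
\func{Re}\left(1+\frac{zh_{\nu}^{\prime\prime}(z)}{h_{\nu}^{\prime}(z)}\right)-\left\vert\frac{zh_{\nu}^{\prime\prime}(z)}{h_{\nu}^{\prime}(z)}\right\vert \geq 1+2r\frac{h_{\nu}^{\prime\prime}(r)}{h_{\nu}^{\prime}(r)} = 1-\sum_{n\geq 1}\frac{2r}{\beta_{\nu,n}^{2}-r}.
\end{equation*}
I would first note that, by a result from \cite{Ba1} analogous to the one used in the proof of Theorem \ref{th6}, one has $\beta_{\nu,1}^{2}>1$ for all $\nu\geq 0$. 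This allows us to restrict to $z\in U$ and, since the series above is an increasing function of $r\in(0,1)$, to pass to the limit $r\nearrow 1$, obtaining
\begin{equation*}
\func{Re}\left(1+\frac{zh_{\nu}^{\prime\prime}(z)}{h_{\nu}^{\prime}(z)}\right)-\left\vert\frac{zh_{\nu}^{\prime\prime}(z)}{h_{\nu}^{\prime}(z)}\right\vert \geq 1-\sum_{n\geq 1}\frac{2}{\beta_{\nu,n}^{2}-1} = 1+2\frac{h_{\nu}^{\prime\prime}(1)}{h_{\nu}^{\prime}(1)}.
\end{equation*}

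Next I would compute the boundary value explicitly. Setting $z=1$ in (\ref{15}) gives
\begin{equation*}
1+2\frac{h_{\nu}^{\prime\prime}(1)}{h_{\nu}^{\prime}(1)} = 1+\frac{J_{\nu+2}(1)-4J_{\nu+1}(1)}{2J_{\nu}(1)-J_{\nu+1}(1)},
\end{equation*}
and applying the recurrence $2(\nu+1)J_{\nu+1}(1)=J_{\nu}(1)+J_{\nu+2}(1)$ to eliminate $J_{\nu+2}(1)$, the numerator simplifies and yields
\begin{equation*}
1+2\frac{h_{\nu}^{\prime\prime}(1)}{h_{\nu}^{\prime}(1)} = \frac{(2\nu-3)J_{\nu+1}(1)+J_{\nu}(1)}{2J_{\nu}(1)-J_{\nu+1}(1)}.
\end{equation*}
This identifies the equation in the statement as the zero set of $\nu\mapsto 1+2h_{\nu}^{\prime\prime}(1)/h_{\nu}^{\prime}(1)$.

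To finish, define $\chi:[0,\infty)\to\mathbb{R}$ by $\chi(\nu)=1+2h_{\nu}^{\prime\prime}(1)/h_{\nu}^{\prime}(1)$. As in Theorems \ref{th5} and \ref{th6}, I would invoke the monotonicity result from \cite{Ba1}, which states that $\nu\mapsto 1+h_{\nu}^{\prime\prime}(1)/h_{\nu}^{\prime}(1)$ is strictly increasing on $[0,\infty)$, hence $\chi$ is strictly increasing there as well. Together with the positivity of the denominator $2J_{\nu}(1)-J_{\nu+1}(1)$ on $[0,\infty)$ (again from \cite{Ba1}), this guarantees a unique root $\nu_{3}\simeq 0.30608\ldots$ of $(2\nu-3)J_{\nu+1}(1)+J_{\nu}(1)=0$, and the inequality $\chi(\nu)>0$ is equivalent to $\nu>\nu_{3}$. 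The sufficiency of $\nu>\nu_{3}$ for uniform convexity in $U$ then follows from Theorem \ref{T1}. Necessity comes from the fact that equality in (\ref{171}) holds at $z=r$; letting $r\nearrow 1$ along the positive real axis shows that if $\nu\leq\nu_{3}$ then $\chi(\nu)\leq 0$, so the uniform convexity criterion (\ref{d1}) fails at some real $z$ arbitrarily close to $1$.

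The main obstacle is not the sufficiency direction, which is essentially a boundary-value computation, but rather making the necessity clean: one has to verify that the sharpness already established in (\ref{171}) (equality precisely at $z=r$) extends to the boundary, so that $\nu\leq\nu_{3}$ genuinely destroys the strict inequality (\ref{d1}) at some $z\in U$. This should be handled by the strict monotonicity of $\phi_{\nu}(r)$ in $r$ combined with the continuity of $\chi$ in $\nu$.
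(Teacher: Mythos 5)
Your proposal is correct and follows essentially the same route as the paper: both start from the sharp lower bound (\ref{171}), use the monotonicity of $r\mapsto 1+2rh_{\nu}''(r)/h_{\nu}'(r)$ together with $\beta_{\nu,1}>1$ to reduce to the boundary value at $r=1$, rewrite that value via the recurrence $2(\nu+1)J_{\nu+1}(1)=J_{\nu}(1)+J_{\nu+2}(1)$ as $\bigl((2\nu-3)J_{\nu+1}(1)+J_{\nu}(1)\bigr)/\bigl(2J_{\nu}(1)-J_{\nu+1}(1)\bigr)$, and conclude by the strict monotonicity in $\nu$ and the positivity of $2J_{\nu}(1)-J_{\nu+1}(1)$ cited from \cite{Ba1}. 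Your explicit treatment of necessity (equality at $z=r$ and $r\nearrow 1$) is if anything slightly more careful than the paper's, which simply asserts the converse direction.
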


\begin{proof}
Taking into account the the inequality (\ref{171}) for $z\in U$ we know that%
\begin{eqnarray*}
\func{Re}\left( 1+\frac{zh_{\nu }^{\prime \prime }(z)}{h_{\nu }^{\prime }(z)}%
\right) -\left\vert \frac{zh_{\nu }^{\prime \prime }(z)}{h_{\nu }^{\prime
}(z)}\right\vert &\geq &1-\sum_{n\geq 1}\frac{2r}{\beta _{\nu ,n}^{2}-r} \\
&=&1+2\frac{rh_{\nu }^{\prime \prime }(r)}{h_{\nu }^{\prime }(r)},\
|z|<r<\beta _{\nu ,1}^{2}<j_{\nu ,1}^{2}.
\end{eqnarray*}%
Since the mapping $\phi _{\nu }:[0,\alpha _{\nu ,1})\rightarrow \mathbb{R}$
defined by $\phi _{\nu }(r)=1+2\frac{rh_{\nu }^{\prime \prime }(r)}{h_{\nu
}^{\prime }(r)}$ is strictly decreasing and the inequalities $1<\beta _{\nu
,1}$ hold for $\nu \geq 0$ (see \cite[Lemma 2.4]{Ba1}), we get that%
\begin{eqnarray*}
\func{Re}\left( 1+\frac{zh_{\nu }^{\prime \prime }(z)}{h_{\nu }^{\prime }(z)}%
\right) -\left\vert \frac{zh_{\nu }^{\prime \prime }(z)}{h_{\nu }^{\prime
}(z)}\right\vert &\geq &1-\sum_{n\geq 1}\frac{2r}{\beta _{\nu ,n}^{2}-r} \\
&\geq &1-\sum_{n\geq 1}\frac{2}{\beta _{\nu ,n}^{2}-1}=1+2\frac{h_{\nu
}^{\prime \prime }(1)}{h_{\nu }^{\prime }(1)} \\
&=&1+\frac{2(\nu -1)J_{\nu +1}(1)-J_{\nu }(1)}{2J_{\nu }(1)-J_{\nu +1}(1)}.
\end{eqnarray*}%
Now, consider the function $\phi :[0,\infty )\rightarrow 
\mathbb{R}
,$ defined by 
\begin{equation*}
\phi (\nu )=1+\frac{2(\nu -1)J_{\nu +1}(1)-J_{\nu }(1)}{2J_{\nu }(1)-J_{\nu
+1}(1)}.
\end{equation*}%
We know that $v\mapsto 1+$ $\frac{h_{\nu }^{\prime \prime }(1)}{h_{\nu
}^{\prime }(1)}=1+\frac{1}{2}\frac{2(\nu -1)J_{\nu +1}(1)-J_{\nu }(1)}{%
2J_{\nu }(1)-J_{\nu +1}(1)}$ is stricly increasing and $2J_{\nu }(1)-J_{\nu
+1}(1)>0$ on $[0,\infty )$ (see \cite{Ba1}), thus we can easily see that the
function $\phi $ is stricly increasing on $[0,\infty )$ too$.$ Since $\phi $
is stricly increasing, it follows that if $\nu >\nu _{3}\simeq 0.30608...,$
then we get the inequality%
\begin{equation*}
1+\frac{2(\nu -1)J_{\nu +1}(1)-J_{\nu }(1)}{2J_{\nu }(1)-J_{\nu +1}(1)}=\phi
(\nu )\geq \phi (\nu _{3})=1+\frac{2(\nu _{3}-1)J_{\nu _{3}+1}(1)-J_{\nu
_{3}}(1)}{2J_{\nu _{3}}(1)-J_{\nu _{3}+1}(1)}=0.
\end{equation*}

Thus, under the conditon $\nu >\nu _{3}\simeq 0.30608...,$ we have for $z\in
U$ 
\begin{equation*}
\func{Re}\left( 1+\frac{zh_{\nu }^{\prime \prime }(z)}{h_{\nu }^{\prime }(z)}%
\right) -\left\vert \frac{zh_{\nu }^{\prime \prime }(z)}{h_{\nu }^{\prime
}(z)}\right\vert >\phi (\nu _{3})=0.
\end{equation*}%
Thus, we proved that the function $h_{\nu }$ is uniformly convex in $U$ if
and only if $\nu >\nu _{3}\simeq 0.30608...,$ where $\nu _{3}$ is the unique
root of the equation 
\begin{equation*}
1+\frac{2(\nu -1)J_{\nu +1}(1)-J_{\nu }(1)}{2J_{\nu }(1)-J_{\nu +1}(1)}=0.
\end{equation*}%
In \cite{Ba1}, authors proved that $2J_{\nu }(1)-J_{\nu +1}(1)>0$ when $\nu
\geq 0$. Thus from last equality we obtain 
\begin{equation*}
(2\nu -3)J_{\nu +1}(1)+J_{\nu }(1)=0.
\end{equation*}
\end{proof}

\end{document}